\newcommand{\extraNode}[6]%
{%
\dynkinPlaceRootRelativeTo{#1}{#2}{#3}{#4}{#5}
\dynkinIndefiniteSingleEdge{#1}{#2}
\dynkinRootMark{o}{#1}
\advance\dynkin@nodes by 1
\dynkinLabelRoot{#1}{#6} 
}%
\def\<{\langle}
\def\>{\rangle}
\newcommand{\RNum}[1]{\uppercase\expandafter{\romannumeral #1\relax}}
\newcommand{\fkg}{\ensuremath{\mathfrak{g}}\xspace}
\newcommand{\fkp}{\ensuremath{\mathfrak{p}}\xspace}
\newcommand{\BF}{\ensuremath{\mathbb {F}}\xspace}
\newcommand{{\BG}}{\ensuremath{\mathbb {G}}\xspace}
\newcommand{{\BK}}{\ensuremath{\mathbb {K}}\xspace}
\newcommand{\BZ}{\ensuremath{\mathbb {Z}}\xspace}
\newcommand{\Bk}{\ensuremath{\mathbf {k}}\xspace}
\newcommand{\CB}{\ensuremath{\mathcal {B}}\xspace}
\newcommand{\CC}{\ensuremath{\mathcal {C}}\xspace}
\newcommand{\CO}{\ensuremath{\mathcal {O}}\xspace}
\newcommand{\CP}{\ensuremath{\mathcal {P}}\xspace}
\newcommand{\CS}{\ensuremath{\mathcal {S}}\xspace}
\newcommand{\Ad}{{\mathrm{Ad}}}
\newcommand{\ad}{{\mathrm{ad}}}
\newcommand{\Inv}{\mathrm{Inv}}
\newcommand{\id}{\ensuremath{\mathrm{id}}\xspace}
\newtheorem{theorem}{Theorem}
\newtheorem{proposition}[theorem]{Proposition}
\newtheorem{lemma}[theorem]{Lemma}
\newtheorem{corollary}[theorem]{Corollary}
\theoremstyle{definition}
\newtheorem{definition}[theorem]{Definition}
\newtheorem{remark}[theorem]{Remark}
\numberwithin{equation}{section}
\numberwithin{theorem}{section}
\theoremstyle{plain}
\renewcommand{\to}{%
   \ifbool{@display}{\longrightarrow}{\rightarrow}%
   }
\let\shortmapsto\mapsto
\renewcommand{\mapsto}{%
   \ifbool{@display}{\longmapsto}{\shortmapsto}%
   }
\newlength{\olen}
\newlength{\ulen}
\newlength{\xlen}
\newcommand{\xra}[2][]{%
   \ifbool{@display}%
      {\settowidth{\olen}{$\overset{#2}{\longrightarrow}$}%
       \settowidth{\ulen}{$\underset{#1}{\longrightarrow}$}%
       \settowidth{\xlen}{$\xrightarrow[#1]{#2}$}%
       \ifdimgreater{\olen}{\xlen}%
          {\underset{#1}{\overset{#2}{\longrightarrow}}}%
          {\ifdimgreater{\ulen}{\xlen}%
             {\underset{#1}{\overset{#2}{\longrightarrow}}}
             {\xrightarrow[#1]{#2}}}}%
      {\xrightarrow[#1]{#2}}
   }
\newcommand{\xyra}[2][]{%
   \settowidth{\xlen}{$\xrightarrow[#1]{#2}$}%
   \ifbool{@display}%
      {\settowidth{\olen}{$\overset{#2}{\longrightarrow}$}%
       \settowidth{\ulen}{$\underset{#1}{\longrightarrow}$}%
       \ifdimgreater{\olen}{\xlen}%
          {\mathrel{\xymatrix@M=.12ex@C=3.2ex{\ar[r]^-{#2}_-{#1} &}}}%
          {\ifdimgreater{\ulen}{\xlen}%
             {\mathrel{\xymatrix@M=.12ex@C=3.2ex{\ar[r]^-{#2}_-{#1} &}}}
             {\mathrel{\xymatrix@M=.12ex@C=\the\xlen{\ar[r]^-{#2}_-{#1} &}}}}}%
      {\mathrel{\xymatrix@M=.12ex@C=\the\xlen{\ar[r]^-{#2}_-{#1} &}}}%
   }
\newcommand{\xla}[2][]{%
   \ifbool{@display}%
      {\settowidth{\olen}{$\overset{#2}{\longleftarrow}$}%
       \settowidth{\ulen}{$\underset{#1}{\longleftarrow}$}%
       \settowidth{\xlen}{$\xleftarrow[#1]{#2}$}%
       \ifdimgreater{\olen}{\xlen}%
          {\underset{#1}{\overset{#2}{\longleftarrow}}}%
          {\ifdimgreater{\ulen}{\xlen}%
             {\underset{#1}{\overset{#2}{\longleftarrow}}}
             {\xleftarrow[#1]{#2}}}}%
      {\xleftarrow[#1]{#2}}
   }
\newcommand{\isoarrow}{%
   \ifbool{@display}{\overset{\sim}{\longrightarrow}}{\xrightarrow\sim}%
   }
\begin{document}

\title[On certain varieties attached to braids]{On certain varieties attached to braid representatives of a Weyl group element}

\thanks{}
\author[Chengze Duan]{Chengze Duan}
\address[C.Duan]{Department of Mathematics, University of Maryland, College Park, MD, 20740}
\keywords{Lusztig varieties, Deligne-Lusztig varieties, Good elements}
\subjclass[2020]{20G05,20C33}
\email{cduan12@umd.edu}

\maketitle
\begin{abstract}
    Let $G$ be a connected reductive group over an algebraically closed field with Weyl group $W$. The analogy between Lusztig varieties and Deligne-Lusztig varieties associated to minimal length elements in elliptic conjugacy classes of $W$ was studied by He and Lusztig. In this paper, we study the parabolic version of above varieties associated to some good braid elements for any conjugacy class of $W$. This also leads to an alternative version of Lusztig's map in general case.
\end{abstract}

\section{Introduction}
\subsection{Overview}
Let $G$ be a connected reductive group over an algebraically closed field $\Bk$. Let $F:G\to G$ be the Frobenius map if $\Bk=\overline{\BF_q}$ for a finite field $\BF_q$. Fix a ($F$-stable) Borel subgroup $B_0$ of $G$ and let $W$ be the Weyl group of $G$. Let $\CB$ be the variety of Borel subgroups of $G$. Recall that the set of $G$-orbits on $\CB \times \CB$ for the diagonal $G$-action is indexed by $W$. Let $O_w$ be the $G$-orbit corresponding to $w\in W$. The length function on $W$ is then $l(w)=\dim O_w-\dim \CB$. For any $w\in W$, the associated Deligne-Lusztig variety and Lusztig variety are defined by
\begin{eqnarray*}
    X_w&=&\{B\in \CB \mid (B,F(B))\in O_w\};\\
    Y_w&=&\{(g,B)\in G\times \CB \mid (B,gBg^{-1})\in O_w\}.
\end{eqnarray*}
Deligne-Lusztig varieties were defined in \cite[Definition 1.4]{DL} and played an important role in the representation theory of finite groups of Lie type. Lusztig varieties were introduced first in \cite[\S 1]{Lu0}. The general form was defined in \cite[\S 2.4]{Lu01} and was used to define the character sheaves. This was further generalized to the twisted case in \cite[\S 28]{Lu02}.

In \cite[\S 4]{Lu1}, Lusztig explicitly constructed a map $\Phi$ from conjugacy classes of $W$ to unipotent conjugacy classes in $G$. In the same paper, he investigated the action of $G$ (resp. $G^F$) on Lusztig varieties (resp. Deligne-Lusztig varieties) corresponding to minimal length elements in any elliptic conjugacy class of $W$. Using these results, Lusztig gave an alternative definition of $\Phi$ for elliptic conjugacy classes of $W$. In \cite{Lu2} and \cite{HL}, the results on these varieties were generalized to the twisted case for disconnected groups. Moreover, the transversal slices constructed by He and Lusztig also play a role in these results. These results were later used in the study of He on affine Deligne-Lusztig varieties (see \cite[\S 4]{He1}).

In \cite[\S 3]{D}, the author constructed good position braid representatives for all conjugacy classes of $W$. In the elliptic case, these will give us minimal length elements. The author used these braid elements to construct transversal slices for all unipotent conjugacy classes. In this paper, we investigate the action of $G$ (resp. $G^F$) on Lusztig varieties (resp. Deligne-Lusztig varieties) corresponding to these elements. More specifically, we will consider the parabolic version of these varieties, which actually fit our braid elements better. Some of our results are based on the study of Digne and Michel on parabolic Deligne-Lusztig varieties (see \cite{DM}).

\subsection{Main results}
In this paper, we consider the following two cases.
\begin{itemize}
    \item case \RNum{1}: $G$ is the identity component of a disconnected reductive group $\tilde{G}$ with a fixed connected component $D$;
    \item case \RNum{2}: $\Bk=\overline{\BF_q}$ for some finite field $\BF_q$ and $F:G \to G$ is the Frobenius map.
\end{itemize}
In these two cases, the component $D$ and the Frobenius map $F$ define twists of $W$. This leads to a twisted Coxeter group $\tilde{W}$ in each case (see \S 2.3). Starting from a good position braid representative of a twisted conjugacy class in $\tilde{W}$, one can construct the corresponding parabolic (twisted) Lusztig variety $Y$ and parabolic Deligne-Lusztig variety $X$ together with their natural coverings $\tilde{Y}$ and $\tilde{X}$ (see \S 3.3 for precise definition). These varieties enable natural left actions of $G$ or $G^F$. 
\begin{theorem} (see also theorem \ref{3.7}) Let $w$ be the projection of a good position braid representative of a twisted conjugacy class in $\tilde{W}$. We have

(a) Any isotropy group of the $G$ (resp. $G^F$)-action on $\tilde{Y}$ (resp. $\tilde{X}$) is trivial;

(b) Any isotropy group of the $G$ (resp. $G^F$)-action on $Y$ (resp. $X$) is isomorphic to a subgroup of a reductive subgroup $L$ of $G$;

(c) The varieties $\tilde{Y}$ and $Y$ are affine. 
\end{theorem}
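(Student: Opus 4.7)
The plan is to prove all three parts simultaneously by induction on the length of the good position braid representative $\hat w$ of the twisted conjugacy class, exploiting the specific decomposition afforded by the construction of good position elements in \cite[\S 3]{D}. The organizing principle is that $\hat w$ admits a reduced expression which factors through a parabolic sub-Coxeter group and terminates with a contribution from the Weyl group of a reductive subgroup $L$; this $L$ will be the one appearing in part (b).

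First I would unpack a point of $\tilde Y$ or $Y$ along a chosen reduced expression $s_{i_1}\cdots s_{i_n}$ of $\hat w$ as a chain of Borel subgroups $B_0,B_1,\ldots,B_n$ with prescribed successive relative positions, where $B_n$ is the $g$-conjugate of $B_0$ (and similarly for $X$, $\tilde X$ with $F(B_0)$ in place of $gB_0g\i$); the tilde versions carry extra trivialization data amounting to a choice of lifts along a torus cover. For parts (a) and (b), the key observation is that the left $G$-action (resp.\ $G^F$-action) is by simultaneous conjugation of the whole chain, so any stabilizer must normalize every $B_j$. Peeling the reduced word one letter at a time forces the stabilizer to commute with the corresponding root subgroup $U_{\alpha_{i_j}}$; at the end what remains is an intersection through a torus, times the contribution of the terminal good-position piece, which by construction lies in $L$. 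In $\tilde Y$ the additional trivialization rigidifies the torus ambiguity and yields triviality (giving (a)), while in $Y$ only the piece inside $L$ survives, yielding (b).

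For affineness in (c), I would exploit the same fibration tower: each intermediate layer is a torsor under $\BG_a$ coming from a root subgroup, hence affine, and the base of the tower is affine by induction, with the base case being an open subvariety of $L$ attached to a minimal length element in a smaller elliptic class of the Weyl group of $L$ (affine by the work of Digne--Michel \cite{DM} and He--Lusztig \cite{HL}). Iterating affine-group torsors preserves affineness, so $\tilde Y$ is affine; the covering map $\tilde Y\to Y$ is a torsor under a subquotient of a torus, hence an affine morphism, so $Y$ is affine as well.

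The main obstacle will be the bookkeeping at the inductive step: one has to verify that stripping a letter from the reduced expression of $\hat w$ either still represents a good position braid element for a related twisted conjugacy class in a smaller Coxeter group, or else produces a base case one can handle directly, and that the Levi $L$ emerging at the bottom of the tower is intrinsic to the conjugacy class (in particular, independent of the chosen good position representative). This compatibility is essentially built into the construction in \cite[\S 3]{D}, but translating it uniformly through the twists in cases \RNum{1} and \RNum{2}, and tracking how the trivialization data interacts with the twisted Frobenius, are the parts requiring real care.
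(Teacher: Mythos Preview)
Your inductive scheme has a genuine gap: the good-position property of $\hat w$ is the \emph{global} identity $(\delta\underline{w})^d=\delta^d\,\underline{w_0w'}\cdot b'$ of Corollary~\ref{3.3}, and there is no mechanism by which a substring of a reduced expression of $w$ inherits any comparable property. So after you strip a letter, you have no inductive hypothesis to invoke---the shortened word is not a good-position representative for any twisted conjugacy class, and there is no smaller Coxeter group in sight. The paper does not induct on $l(\hat w)$ at all.

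For (a), your peeling argument is too weak even before the induction issue. Normalizing each Borel in the length-$l(w)$ chain from $B$ to $gBg\i$ only forces the stabilizer into $\bigcap_j B_j$, and for non-elliptic $w$ this intersection can be much larger than $L$. The paper's device is to \emph{iterate} the twisted conjugation $k$ times (with $\delta^k=1$), producing a chain of length $kl(w)$ from $B$ to $g^kBg^{-k}$; the braid identity above then lets one rearrange this long chain via braid moves (following Lusztig's technique in \cite{Lu1}) so that two of its Borels sit in relative position $w_0w'$. Their intersection is now small enough---a torus times $U_{\tilde w}$---and only then does the extra rigidification in $\tilde Y$ (namely $g'^{-1}xg'\in U_{\delta^{-1}(I_{\tilde w})}$, forcing $x$ unipotent with support disjoint from $R^{\tilde w}$) kill the stabilizer. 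Part (b) then follows formally from (a) via the principal $L^w_{\delta^{-1}(I_{\tilde w})}$-bundle structure of $\pi_{\delta,w}$.

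For (c), the paper likewise avoids any tower of $\BG_a$-torsors over a mystery base: it passes to the $k$-fold product $b=w\delta(w)\cdots\delta^{k-1}(w)=\underline{w_0w'}\cdot b'$, so that $\underline{w_0w'}$ is a left divisor, and then invokes \cite[Proposition~7.26]{DM} to get affineness of $\tilde O(I,b)$ in one shot; $\tilde Y$ embeds as a closed subvariety of $D\times\tilde O(I,b)$ via $(g,g'U)\mapsto(g,g'U,gg'Ud^{-1},\ldots,g^{k-1}g'Ud^{1-k})$. Your proposed base case (``an open subvariety of $L$ attached to a minimal length element in a smaller elliptic class'') does not materialize from peeling letters off $\hat w$.
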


We refer to \S 3.3 for the definition of $L$. Now if we further require $D=G$, we have the following result.

\begin{proposition}(see also proposition \ref{4.8})
Let $w$ be the projection of a good position braid representative of a conjugacy class in $W$. We have

(a) The orbit space $G\backslash \tilde{Y}$ is isomorphic to the affine space $U^w$;

(b) The orbit space $G\backslash Y$ is isomorphic to $L\backslash U^w$;

(c) If $\CC=\Psi(\CO)$ for some $\CO\in [G_u]$, then $\CO$ is the unique $\CC$-small unipotent conjugacy class in $G$.
\end{proposition}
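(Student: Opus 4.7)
The plan is to tackle (a), (b), (c) in sequence, with (a) being the core calculation, (b) a formal consequence, and (c) the substantive part that relies on good-element geometry imported from \cite{D}.

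For (a), I would fix the good position braid $\mathbf{w}$ underlying $w$ and exploit the natural forgetful map $\tilde{Y} \to \CB$ that remembers only the initial Borel. This map is $G$-equivariant for the left translation action; since $G$ acts transitively on $\CB$ with stabilizer $B_0$, the orbit space $G \backslash \tilde Y$ is identified with the fiber over $B_0$. By the definition of the parabolic Lusztig variety as a Bott-Samelson-style tower following the chosen factorization of $\mathbf{w}$, this fiber parametrizes chains of Borels starting at $B_0$ together with an element $g$ implementing the conjugation. Unwinding inductively along the word (this is where the good-position structure of $\mathbf w$, rather than an arbitrary lift, matters) and quotienting out the residual $B_0$-freedom, one identifies the remaining parameter with $U^w := U \cap \dot{w}\,U^{-}\dot{w}^{-1}$ (or the parabolic-word analogue of Digne-Michel \cite{DM}). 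The group $U^w$ is isomorphic as a variety to $\BA^{l(w)}$ via exponential/root coordinates, and by Theorem \ref{3.7}(a) the freeness of the $G$-action ensures the quotient is genuinely this variety.

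For (b), I would use that the covering $\tilde Y \to Y$ is, by the construction of \S 3.3, a quotient by a free action of the reductive group $L$ commuting with left $G$-translation. Passing to $G$-orbits and then $L$-orbits (or vice versa) is therefore unambiguous, so $G \backslash Y \cong L \backslash (G \backslash \tilde Y) \cong L \backslash U^w$ where the $L$-action on $U^w$ is the one inherited through the isomorphism of (a). This step is essentially formal once (a) is in hand.

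Part (c) is where I expect the main obstacle. The strategy is to combine (a)-(b) with the transversal slice attached to $\mathbf{w}$ constructed in \cite[\S 3]{D}: by construction this slice has the form of a translate of $L \backslash U^w$ inside $G$, and it meets the unipotent class $\CO$ in a dense relatively open subset of expected dimension. The map $\Psi$ is set up so that $\CC = \Psi(\CO)$ corresponds precisely to this slice. Uniqueness of the $\CC$-small class then becomes a dimension comparison: for any other unipotent class $\CO'$ meeting the slice, the smallness condition forces an equality relating $\dim L$, $l(w)$, and $\codim \CO'$, which by the slice property of \cite{D} can only be satisfied when $\CO' = \CO$. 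The delicate point will be checking the dimension bookkeeping uniformly across conjugacy classes and confirming that the good-element structure of $\mathbf{w}$ (as opposed to an arbitrary braid lift) is what forces the sharp inequality; I anticipate this reduces to the transversality and dimension statements already established in \cite[\S 3]{D} together with the identification $G \backslash Y \cong L \backslash U^w$ from (b).
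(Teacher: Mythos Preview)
Your overall architecture is reasonable, but there is a genuine gap in part (a), and it propagates because you have inverted where the hard work from \cite{D} enters.

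First, the variety $\tilde Y=\tilde Y_{I_w}(w)$ in \S 3.3 is not a Bott--Samelson tower of Borels: it is simply the set of pairs $(g,g'\,{}^wU_{I_w})$ with $g'^{-1}gg'\in \dot w U_{I_w}$. The Bott--Samelson picture appears only in the affineness proof of Theorem~\ref{3.7}(c), not here. After passing to the fiber over the base point and quotienting by its stabilizer, the question becomes: does the conjugation action of ${}^wU_{I_w}$ on $\dot w U_{I_w}$ admit $\{\dot w v: v\in U^w\}$ as a cross-section? This is the content of Lemma~\ref{4.6}, and it does \emph{not} follow from an inductive unwinding along a reduced word. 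The paper instead proves a bijection $\eta_w: {}^{w^{-1}}U_{I_w}\times U^b \to UT^wU_{(R^w)^-}$ (Lemma~\ref{4.5}), and both the existence and uniqueness halves of that bijection are deduced from the transversal-slice results \cite[Theorem 1.2, Lemma 4.6, \S4.2]{D}. In other words, the slice machinery you reserve for (c) is already the engine driving (a); without it your ``unwinding'' step has no content in the non-elliptic (parabolic) setting, where $U_{I_w}\subsetneq U$ and the naive root-by-root factorisation does not see the $R^w$-part correctly.

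Your part (b) is fine and matches the paper. For (c), the geometry you sketch is slightly off: the slice $S_{Br}(b)$ meets $\CO$ transversally in a \emph{finite} set (zero-dimensional), not in a dense relatively open subset; the dimension identity $\dim\CO+l(w)+\lvert R^w\rvert=\dim G-\dim T^w$ from \cite[Proposition~3.6]{D} is what makes that intersection finite and non-empty, and pulling back to $Y_{I_w}(w)$ is what certifies $Y_{I_w}^{\CO}(w)\neq\emptyset$. Once (a) and (b) are in hand via the slice, the $\CC$-smallness of $\CO$ is immediate from that dimension identity; your dimension-comparison idea for uniqueness is the right instinct, but it too ultimately rests on the same slice input.
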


 We refer to definition \ref{4.1} for the definition of $\CC$-small. This generalizes the definition in \cite[\S 5.5]{Lu1}, which serves as an alternative version of Lusztig's map. The map $\Psi:[G_u]\to [W]$ is defined in \cite[Theorem 0.2]{Lu3}.

\section{Preliminaries}

\subsection{Lusztig's map $\Phi$}
Let $[G]$ be the set of conjugacy classes in $G$ and $[G_u]$ be the set of unipotent conjugacy classes in $G$. Let $(W,S)$ be its Weyl group and $[W]$ be the set of conjugacy classes of $W$.

For any subset $I\subset S$, denote the corresponding standard parabolic subgroup as $W_I$. Recall that a conjugacy class $\CC\in [W]$ is called \textit{elliptic} if for any $I\subsetneq S$, we have $\CC \cap W_I=\emptyset$. Let $[W_e]$ be the set of elliptic conjugacy classes of $W$. Recall the following result of Lusztig.

\begin{theorem}\cite[Theorem 0.4, Proposition 0.6]{Lu1}
    Lusztig's map $\Phi$ is a surjective map from $[W]$ to $[G_u]$. Moreover, the resctriction of $\Phi$ on $[W_e]$ is injective.
\end{theorem}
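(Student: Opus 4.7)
The plan is to construct $\Phi$ explicitly via minimal length representatives and then verify the two properties separately. First, for each $\CC\in[W]$, I would pick a minimal length element $w\in\CC$; by the work of Geck--Pfeiffer, any two such representatives are linked by a chain of cyclic shifts within $\CC$, which gives the rigidity needed to show that geometric invariants attached to $w$ depend only on $\CC$. Given such a $w$, I would examine the Bruhat cell $BwB$ and its intersection with the unipotent variety of $G$; equivalently, one studies the image in $G$ of the first projection from the Lusztig variety $Y_w$. Among the unipotent classes meeting this image, there is a distinguished one of maximal dimension, and I would define $\Phi(\CC)$ to be this class. Independence from the choice of $w$ is then established by checking that cyclic shifts preserve the relevant intersections up to $G$-conjugacy.

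For surjectivity, the strategy is: given $\CO\in[G_u]$ and $u\in\CO$, I would exhibit a Borel $B$ and a $w\in W$ so that $u\in BwB$ with this configuration witnessing $\Phi([w])=\CO$. A uniform construction can be given using the Bala--Carter classification: $\CO$ corresponds to a distinguished unipotent class in a Levi subgroup $L$ of some parabolic, and one takes $w$ to be a Coxeter (or suitable elliptic) element of $W_L$, then induces up. One must then check that this recipe realizes every unipotent class, which in full generality reduces to case-by-case verification in the exceptional types (as in Lusztig's paper) and a partition-combinatorial calculation in the classical types.

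For injectivity on $[W_e]$, the starting observation is that if $\CC$ is elliptic with $w\in\CC$ of minimal length, then $Y_w$ has a particularly rigid structure: all $G$-stabilizers on $Y_w$ are finite, and there is a unique open $G$-orbit whose unipotent part (obtained via the Jordan decomposition of the first coordinate) lies in $\Phi(\CC)$. The length $l(w)$ is an invariant of $\CC$, and together with the Springer-type data associated to $\Phi(\CC)$ it pins down $\CC$ uniquely within the elliptic classes. Thus $\Phi(\CC_1)=\Phi(\CC_2)$ forces $\CC_1=\CC_2$ whenever both are elliptic. This is the step where Lusztig's alternative definition of $\Phi$ on $[W_e]$ via the $G$-action on $Y_w$ becomes essential: the alternative characterization directly reads $\CC$ off the geometry of $Y_w$.

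The principal obstacle is the injectivity statement in part (b): it is not a formal consequence of the construction that distinct elliptic classes yield distinct unipotent classes, and one must genuinely rule out collisions. I expect that the clean elliptic-case structure of $Y_w$ carries the argument in spirit, but turning this into a case-free proof requires either a sharp parametrization of $[W_e]$ by data extractable from $[G_u]$ (for instance, via Springer theory restricted to elliptic classes, where the correspondence is particularly transparent), or a direct dimension and component-group computation on $Y_w$ that separates elliptic classes sharing the same minimal length. In Lusztig's original treatment this is ultimately handled by explicit verification in each Cartan type, and any shorter argument would have to exploit the minimal-length rigidity established in the preliminary steps above.
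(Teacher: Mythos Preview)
This theorem is not proved in the present paper; it is cited from \cite[Theorem 0.4, Proposition 0.6]{Lu1} and used as background. The paper only remarks that Lusztig's original definition of $\Phi$ is based on the \emph{excellent decomposition} of certain elliptic elements (\cite[\S 2]{Lu1}), with the explicit elliptic description given in \cite[\S 4]{Lu1}. So there is no ``paper's own proof'' to compare against; your proposal is an attempt to reconstruct Lusztig's argument, and should be judged on those terms.

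On those terms, there is a genuine gap at the very first step: your proposed definition of $\Phi(\CC)$ as ``the unipotent class of maximal dimension meeting the image of $Y_w$ in $G$'' (equivalently, meeting $B\dot w B$) is not Lusztig's definition and does not obviously produce a well-defined map. For a minimal-length element $w$ in a non-trivial class, the cell $B\dot w B$ typically meets many unipotent classes, and the one of maximal dimension among these need not be the class Lusztig assigns; in particular, nothing in your sketch rules out that this maximal class is simply the regular unipotent class whenever it intersects. Lusztig's actual construction goes the other way: for elliptic $\CC$ with minimal $w$, one looks at the \emph{minimal} strata $Y_w^{\CO}$ (those with $\dim Y_w^{\CO}=\dim G_{\ad}$), the $\CC$-small classes recalled in \S 2.2 of the present paper, and shows there is a unique unipotent one. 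Your surjectivity sketch via Bala--Carter and induction from Levi subgroups is plausible in outline but is not how \cite{Lu1} proceeds, and your injectivity argument presupposes that the geometry of $Y_w$ ``reads off'' $\CC$, which is precisely what needs to be established and is in fact handled in \cite{Lu1} by the excellent-decomposition machinery together with type-by-type analysis. Before the rest of the outline can be assessed, the definition of $\Phi$ must be corrected to match Lusztig's.
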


The first definition of this map is based on the excellent decomposition of certain elliptic elements in $W$ (see \cite[\S 2]{Lu1}). An explicit description of the restriction map of $\Phi$ to $[W_e]$ is given in \cite[\S 4]{Lu1} (See also \cite[\S 2.2]{D} for an explicit description in non-elliptic case).

\subsection{$\CC$-small classes}
In this section we recall some results in \cite[\S 5]{Lu1}. Recall there is a natural stratification
$Y_w=\underset{\CO\in [G]}{\bigsqcup} Y_w^{\CO}$, where
\begin{equation*}
    Y_w^{\CO}=\{(g,B)\in Y_w \mid g\in \CO\}.
\end{equation*}
We know that $G$ and $G_{\ad}=G\slash Z(G)$ act on $Y_w$ and $Y_w^{\CO}$ by
\begin{eqnarray*}
    x&:& (g,B)\mapsto (xgx^{-1},xBx^{-1});\\
    xZ(G)&:& (g,B)\mapsto (xgx^{-1},xBx^{-1}),
\end{eqnarray*}
and $Y_w^\CO$ is a union of $G_{\ad}$-orbits in $Y_w$. Now let $w$ be a minimal length element in an elliptic conjugacy class $\CC\in [W_e]$. Lusztig showed that $\dim Y_w^\CO\geq \dim G_\ad$ if $Y_w^\CO$ is nonempty (see \cite[Theorem 5.2]{Lu1}).
\begin{definition}\cite[\S 5.5]{Lu1}
    Let $\CC$ be an elliptic conjugacy class of $W$. A conjugacy class $\CO\in [G]$ is called \textit{$\CC$-small} if $\dim Y_w^\CO=\dim G_\ad$ and $Y_w^\CO \neq \emptyset$.
\end{definition}
Lusztig proved that there is a unique $\CC$-small unipotent conjugacy class in $G$, which is exactly $\Phi(\CC)$ (see \cite[\S 5.8]{Lu1}).

\subsection{Deligne-Lusztig varieties and (twisted) Lusztig varieties}
In this section we recall the results in \cite{Lu2}. First recall a twist $\delta$ of $W$ is a group automorphism $W\to W$ preserving the set of simple reflections. Two elements $w,w'\in W$ are called $\delta$-conjugate if there exists $x\in W$ such that $w'=\delta(x)wx^{-1}$. Let $\tilde{W}=\langle \delta \rangle \ltimes W$ be the twisted Weyl group. In \cite[Remark 2.1]{GKP}, it is proved that $w\mapsto \delta w$ defines a bijection between the $\delta$-conjugacy classes of $W$ and the $W$-conjugacy classes of $\tilde{W}$ contained in the coset $\delta W$. A $\delta$-conjugacy class $\CC^\delta$ is called \textit{$\delta$-elliptic} if $\CC^\delta\cap W_I=\emptyset$ for any $\delta$-stable subset $I\subsetneq S$.

Recall our two cases are as follows.
\begin{itemize}
    \item case \RNum{1}: $G$ is the identity component of a disconnected reductive group $\tilde{G}$ with a fixed connected component $D$;
    \item case \RNum{2}: $\Bk=\overline{\BF_q}$ for some finite field $\BF_q$ and $F:G \to G$ is the Frobenius map.
\end{itemize}
In the first case, the component $D$ defines a twist $\delta_D:W\to W$ satisfying $gO_wg^{-1}=O_{\delta_D(w)}$ for $g\in D$. In the second case, the Frobenius map $F$ defines a twist $\delta_F:W \to W$  satisfying $F(O_w)=O_{\delta_F(w)}$. For any $w\in W$, the (twisted) Lusztig variety are defined as
\begin{equation*}
    Y_w^D=\{(g,B)\in D\times \CB \mid (B,gBg^{-1})\in O_w\}.
\end{equation*}
Similar as before, we have $G$ (resp. $G^F$) acts on $Y_w^D$ (resp. $X_w$) by $x\cdot (g,B)=(xgx^{-1},xBx^{-1})$ (resp. $x\cdot B=xBx^{-1}$). 

In \cite[\S 1.11]{DL} and \cite[\S 0.2]{Lu2}, we see that $X_w$ and $Y_w^D$ admit natural finite coverings. Let $T$ be a maximal torus of a fixed ($F$-stable) Borel subgroup $B_0$. Let $U$ be the unipotent radical of $B_0$ and $U^-$ be opposite to $U$. Choose $d\in D$ be such that $dTd^{-1}=T$ and $dB_0d^{-1}=B_0$. As in \cite[\S 0.2]{Lu2}, choose representatives $\dot{w}$ for all $w\in W$ in $N_G(T)$ such that $\dot{w}=\dot{w_1}\dot{w_2}$ if $w=w_1w_2$ is reduced, and $^{\delta_D}\dot{w}=d\dot{w}d^{-1}$ or $^{\delta_F}\dot{w}=F(\dot{w})$ respectively. Here we denote the representative of $\delta(w)$ as $^\delta\dot{w}$. Let $^wU=U\cap \dot{w}U\dot{w}^{-1}$ be generated by root subgroups corresponding to $R^+\setminus \Inv(w^{-1})$ and $U^w=U\cap \dot{w}U^-\dot{w}^{-1}$ be generated by root subgroups corresponding to $\Inv(w^{-1})$. Set $T^w=\{t\in T\mid \dot{w}^{-1}t\dot{w}=dtd^{-1}\}$ in case \RNum{1} or $T^w=\{t\in T\mid \dot{w}^{-1}t\dot{w}=F(t)\}$ in case \RNum{2}. Define
\begin{eqnarray*}
    \tilde{X}_w &=&\{g ^wU\in G\slash ^wU \mid g^{-1}F(g)\in \dot{w}U\};\\ 
    \tilde{Y}_w^D &=&\{(g,g'^wU)\in D\times G\slash ^wU \mid g'^{-1}gg'\in \dot{w}Ud\},
\end{eqnarray*}
and $G$ (resp. $G^F$) acts on $\tilde{Y}_w^D$ (resp. $\tilde{X}_w$) by
\begin{eqnarray*}
    x&:& (g,g'^wU)\mapsto (xgx^{-1},xg'^wU);\\
    x&:& g'^wU\mapsto xg'^wU.
\end{eqnarray*}
We know these are natural coverings of $X_w$ and $Y_w^D$ via $(g,g'^wU)\mapsto (g,g'B_0g'^{-1})$ and $g'^wU\mapsto g'B_0g'^{-1}$. For convenience, we denote these two covering maps both as $\pi_w$. 

\begin{theorem}\cite[Theorem 0.3, Theorem 0.4]{Lu2} \cite[\S 4.2 \S 4.3]{HL}
    Let $w$ be a minimal length element in a $\delta$-elliptic conjugacy class of $W$. In case \RNum{1} we further assume $G$ is semisimple. Then we have

    (a) Any isotropy group of the $G$ (resp. $G^F$)-action on $\tilde{Y}_w^D$ (resp. $\tilde{X}_w$) is trivial;

    (b) Any isotropy group of the $G$ (resp. $G^F$)-action on $Y_w^D$ (resp. $X_w$) is isomorphic to a subgroup of $T^w$, which is a finite abelian group;

    (c) The varieties $Y_w^D$ and $\tilde{Y}_w^D$ are affine;

    (d) The orbit space $G\backslash \tilde{Y}_w^D$ is isomorphic to $U^{\delta_D(w)}$ and the orbit space $G\backslash Y_w^D$ is isomorphic to $T^w \backslash U^{\delta_D(w)}$;

    (e) The orbit space $G\backslash \tilde{X}_w^D$ is quasi-isomorphic to $U^{\delta_F(w)}$ and the orbit space $G\backslash X_w^D$ is quasi-isomorphic to $T^w \backslash U^{\delta_F(w)}$.
\end{theorem}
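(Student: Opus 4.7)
The plan is to follow the approach of Lusztig \cite{Lu2} and He--Lusztig \cite{HL}, establishing (a)--(e) in order. Three ingredients enter: (i) the structure of minimal length elements $w$ in $\delta$-elliptic conjugacy classes, especially their good or excellent decompositions from \cite{Lu1}; (ii) root-subgroup calculations inside ${}^wU$ and $U^w$; and (iii) Lang--Steinberg surjectivity in case \RNum{2}. After the change of coordinates $g = g'hg'^{-1}$ with $h = \dot{w}vd \in \dot{w}Ud$ in case \RNum{1} (and the analogous unwinding in case \RNum{2}), equivariant statements about $\tilde{Y}_w^D$ reduce to intrinsic questions about the conjugation action of ${}^wU$ on $\dot{w}Ud$.

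\textbf{Freeness (a) and finiteness (b).} For (a), if $x \in G$ fixes the coset $g' \cdot {}^wU$ and centralises $g$, then $x = g'ug'^{-1}$ with $u \in {}^wU$, and the centralisation condition becomes $uhu^{-1} = h$, which rearranges to $\dot{w}^{-1}u\dot{w} = v \cdot (dud^{-1}) \cdot v^{-1}$ inside $U$. Filter $U$ by the height of positive roots and project to each successive layer: the assumption that $\dot{w}\delta$ has no fixed vector on $X^*(T) \otimes \BR$ (equivalent to $\delta$-ellipticity of $\CC^\delta$) together with the minimal length of $w$ forces $u = 1$ layer by layer. For (b), the map $\pi_w : \tilde{Y}_w^D \to Y_w^D$ is a $T^w$-torsor, so any isotropy group in $Y_w^D$ injects into $T^w$, and $T^w$ is finite by the same fixed-vector criterion. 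The semisimplicity hypothesis in case \RNum{1} is used to exclude central contributions, and case \RNum{2} proceeds identically after Lang's theorem.

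\textbf{Affineness (c) and orbit spaces (d), (e).} For (c), one identifies $\tilde{Y}_w^D$ with the associated bundle $G \times^{{}^wU} \dot{w}Ud$; since $G/{}^wU$ is affine (a $T$-bundle over the affine open cell through $\dot{w}$) and $\dot{w}Ud$ is affine, so is $\tilde{Y}_w^D$, and $Y_w^D$ inherits affineness through the finite covering $\pi_w$. The bundle description immediately gives $G\backslash \tilde{Y}_w^D \cong {}^wU\backslash \dot{w}Ud$; using the factorisation $U = {}^wU \cdot U^w$ and conjugating by $\dot{w}$, this quotient identifies with $U^{\delta_D(w)}$, proving (d), and a further quotient by $T^w$ yields $G\backslash Y_w^D$. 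Part (e) is the same argument in the $F$-equivariant setting, where the Lang--Steinberg twist produces a quasi-isomorphism rather than an isomorphism with $U^{\delta_F(w)}$.

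\textbf{Main obstacle.} The hard step is (a): triviality of the ${}^wU$-centraliser of a generic point of $\dot{w}Ud$. This is where $\delta$-ellipticity is genuinely used, since if $w$ lay in a proper $\delta$-stable parabolic $W_I$ then root subgroups indexed by positive roots outside the span of $I$ would supply nontrivial centralising elements of ${}^wU$. Carrying out the filtration argument cleanly requires the explicit good or excellent decomposition of minimal length elements, together with careful tracking of which root subgroups of ${}^wU$ are moved outside $U$ by $\Ad(\dot{w}d)$; this is precisely the technical heart of the proofs in \cite{Lu1, Lu2, HL}.
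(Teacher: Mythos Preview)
First, this theorem is quoted from \cite{Lu2} and \cite{HL} and is not proved in the present paper; the relevant comparison is with the paper's proofs of the non-elliptic generalizations in Proposition~\ref{3.5}, Theorem~\ref{3.7}, and Proposition~\ref{4.8}.

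Your argument for (a) has a genuine gap. Filtering $U$ by root height and arguing layer by layer does not work as stated: $\Ad(\dot{w}d)$ permutes root subgroups according to the $w\delta$-action on roots, which does not respect height, so there is no induced map on successive height layers to which the no-fixed-vector hypothesis could be applied. The argument in \cite{Lu1,Lu2} and in the paper's Proposition~\ref{3.5} and Theorem~\ref{3.7}(a) is of a quite different character. One uses the good-element identity $(\delta w)^k=\underline{w_0}\cdot b'$ in the braid monoid---this is where ellipticity and minimal length actually enter---to build, from any point, a uniquely determined sequence of Borel subgroups joined by simple-reflection edges and related to one another by braid moves; any isotropy element $x$ must then fix every Borel in the sequence, hence lies in $B_0^n\cap B_h^n$ with $(B_0^n,B_h^n)\in O_{w_0}$, which is a maximal torus. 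Since $x$ also lies in a conjugate of ${}^wU$ it is unipotent, so $x=1$. Your torsor reduction for (b) is fine, but your affineness sketch for (c) is again not the paper's route and is incomplete: an affine-fibred bundle over an affine base need not be affine, and your claim that $G/{}^wU$ is affine is asserted without proof. The paper (Theorem~\ref{3.7}(c), following \cite{DM}) instead realises $\tilde{Y}$ as a closed subvariety of $D\times\tilde{O}(I,b)$, which is affine precisely because $b$ is left-divisible by $\underline{w_0}$. For (d) your bundle picture is close in spirit to Lemmas~\ref{4.5}--\ref{4.7}, but the identification ${}^wU\backslash \dot{w}Ud \cong U^{\delta(w)}$ you invoke is itself the He--Lusztig cross-section theorem, whose proof again rests on the good-element structure rather than on any height filtration.
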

The analogue of part (c) for $X_w$ and $\tilde{X}_w$ was first proved in \cite[Corollary 1.12]{DL} if $\text{char}(\Bk)$ is good. For arbitrary $\text{char}(\Bk)$, it was proved in \cite[\S 5]{OR} for split classical groups and \cite[Theorem 1.3]{He} in general. It can also be deduced from \cite[Theorem B]{BR}. This theorem is also one of the intrinsic reason of the construction of transversal slices in \cite{HL}.

\section{Non-elliptic case}
In this section, we investigate the above varieties without the assumption on ellipticity and replace minimal length elements by some other elements. We keep our settings in \S 2.3.

\subsection{Good position braid representatives}
We first recall some results in \cite[\S 3]{D}. Let $\tilde{W}=\langle \delta \rangle \ltimes W$ be a twisted finite Coxeter group where $\delta$ is a twist of $W$. Let $R$ be the roots of $W$. Let $B^+(\tilde{W})$ be the braid monoid associated to $\tilde{W}$ and $\pi_{\tilde{W}}$ be the natural projection. For any $w\in W$, denote the natural embedding of $w$ in $B^+(\tilde{W})$ as $\underline{w}$. We have the following result, which is a special case of \cite[Proposition 3.2]{D} together with \cite[Lemma 5.1]{HN}.
\begin{proposition}\label{3.1}
    Let $\tilde{\CC}$ be a $W$-conjugacy class of $\tilde{W}$. There exists a positive integer $d$ and a good position braid representative $\tilde{b}=\sigma b$ of $\tilde{\CC}$ with $\sigma\in \langle \delta \rangle$ and $b\in B^+(W)$ such that $\pi_{\tilde{W}}(\tilde{b})\in \tilde{\CC}$ and 
    \begin{equation*}
        \tilde{b}^d=\sigma^d \underline{w_0}\cdot b',
    \end{equation*}
    where $b'\in B^+(W)$ and $w_0$ is the longest element in $W$.
\end{proposition}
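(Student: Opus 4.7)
The plan is to assemble the statement from the two cited results, combining them in the natural way. The first step is to apply \cite[Proposition 3.2]{D} to the $W$-conjugacy class $\tilde{\CC}$ of $\tilde{W}$, producing a good position braid representative $\tilde{b} = \sigma b$ with $\sigma \in \langle \delta \rangle$, $b \in B^+(W)$, and $\pi_{\tilde{W}}(\tilde{b}) \in \tilde{\CC}$. This realizes $\tilde{b}$ in a specific form compatible with a parabolic subgroup $W_I \subset W$ attached to $\tilde{\CC}$, where after stripping off the part living purely in $\langle\delta\rangle$, the remaining factor restricts to a minimal-length element in its twisted conjugacy class inside the relevant Levi.

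The second step is to invoke \cite[Lemma 5.1]{HN}, which governs the braid-monoid behavior of powers of good elements. In the elliptic case this specializes to the classical Geck--Michel periodicity: for a good element $w$, some braid power $\underline{w}^d$ equals a positive power of $\underline{w_0}$ in $B^+(W)$. The parabolic/twisted version supplied by \cite[Lemma 5.1]{HN} produces a positive integer $d$ and an element $b' \in B^+(W)$ such that $\tilde{b}^d$ has $\underline{w_0}$ as a distinguished left factor, while the twist $\sigma^d$ (central in the sense that conjugation by $\sigma$ permutes the simple generators, hence normalizes $B^+(W)$) can be pulled to the leftmost position. Together these give $\tilde{b}^d = \sigma^d \underline{w_0} \cdot b'$.

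The main obstacle is the compatibility check between the two sources. Concretely, one has to verify that the ``good position'' notion used by \cite{D} matches the hypothesis under which \cite[Lemma 5.1]{HN} applies, and then confirm that the left factor appearing in the $d$-th power is the full $\underline{w_0}$ of $W$ (rather than merely the longest element of the parabolic $W_I$). The second point is essentially a matter of replacing $d$ by a sufficiently large multiple so that $\tilde{b}^d$ becomes divisible on the left by $\underline{w_0}$ in $B^+(W)$, using that the good position property controls the reduced decomposition of all powers of $\tilde{b}$ uniformly. Once these compatibilities are in place, the asserted factorization follows by a direct manipulation in $B^+(\tilde{W})$, with $b'$ defined as whatever remains after dividing out $\sigma^d \underline{w_0}$ on the left.
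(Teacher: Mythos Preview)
Your proposal is aligned with the paper's treatment: the paper gives no explicit proof of this proposition, stating only that it is ``a special case of \cite[Proposition 3.2]{D} together with \cite[Lemma 5.1]{HN}.'' Your two-step plan --- extract the good position braid representative from \cite[Proposition 3.2]{D}, then obtain the $d$-th power factorization from \cite[Lemma 5.1]{HN} --- is exactly how these two inputs are meant to combine, and the compatibility issues you flag are precisely what those cited results are designed to absorb.
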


\begin{remark}
    If $\tilde{\CC}$ is elliptic, then any good position braid representative of $\tilde{\CC}$ is of the form $\underline{w}$ for some minimal length element $w\in \tilde{\CC}$.
\end{remark}

\subsection{Deligne-Lusztig varieties and Lusztig varieties}
From now on, we always set $\delta=\delta_D$ in case \RNum{1} and $\delta=\delta_F$ in case \RNum{2}. Let $\CC^\delta$ be any $\delta$-conjugacy class of $W$ and it corresponds to a $W$-conjugacy class $\tilde{\CC}$ of $\tilde{W}$ in $\delta{W}$. Take a braid element $\tilde{b}$ in Proposition \ref{3.1} for $\tilde{\CC}$. By assumption we have $\tilde{b}=\delta b$ for some $b\in B^+(W)$. Then we have
\begin{equation*}
    b\delta(b)\delta^2(b)\cdots \delta^{d-1}(b)=(\tilde{b})^d=\delta^d\underline{w_0}\cdot b',
\end{equation*}
for some $b'\in B^+(W)$. Now we take $w=\pi_W(b)$ be the projection of $b$ to $W$. Let $R^{\tilde{w}}$ be the fixed root of $\tilde{w}=\delta w$. One can deduce the following corollary from \cite[Proposition 3.2]{D}.
\begin{corollary}\label{3.3}
   Let $\tilde{w}=\delta w$ be as above. There exists $d$ such that
   \begin{equation*}
    (\delta \underline{w})^d=\tilde{w}^d=\delta^d\underline{w_0w'}\cdot b',
\end{equation*}
where $w'$ is the longest element of the standard parabolic subgroup $W'<W$ corresponding to $R^{\tilde{w}}$ and $b'\in B^+(W)$.
\end{corollary}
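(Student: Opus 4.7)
The plan is to apply Proposition~\ref{3.1} and then refine the resulting braid identity by a standard-parabolic factorization of $w_0$. Since $\tilde{\CC}\subset \delta W$, the twist component of any good position braid representative of $\tilde{\CC}$ must be $\delta$, so $\tilde{b}=\delta \underline{w}$ with $w=\pi_W(b)$, and
\begin{equation*}
(\delta \underline{w})^d=\delta^d\underline{w_0}\cdot b_0
\end{equation*}
for some $b_0\in B^+(W)$.

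The extra input needed is drawn directly from the full statement of \cite[Proposition 3.2]{D}: the good position construction produces $w$ with a block structure in which $R^{\tilde{w}}$ is precisely the root system of a standard parabolic subgroup $W'=W_I\subset W$ for some $I\subseteq S$. Granted this, the longest element $w'$ of $W'$ is well-defined, and the expression $w_0=(w_0w')\cdot w'$ is length-additive---a standard fact for longest elements of standard parabolics---so it lifts to a braid identity $\underline{w_0}=\underline{w_0w'}\cdot \underline{w'}$ in $B^+(W)$. Substituting and setting $b':=\underline{w'}\cdot b_0\in B^+(W)$ yields
\begin{equation*}
\tilde{w}^d=(\delta\underline{w})^d=\delta^d\underline{w_0w'}\cdot b',
\end{equation*}
as claimed.

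The main obstacle is the middle step: that $R^{\tilde{w}}$ is the root system of a \emph{standard} parabolic subgroup rather than of an arbitrary reflection subgroup. This is not visible from the version of Proposition~\ref{3.1} recorded in this excerpt, which only tracks the $w_0$-part of $\tilde{b}^d$, but it is intrinsic to the good position construction in \cite[\S 3]{D}, whose explicit form displays $\tilde{w}$ as an identity block on $W_I$ together with a genuine twist on the complementary part. Once that structural feature is imported from \cite{D}, the remainder of the argument is the formal braid-monoid manipulation above.
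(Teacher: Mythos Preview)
Your argument has a genuine gap at the very first step: you assert that $\tilde{b}=\delta\,\underline{w}$, i.e.\ that the braid part $b$ of the good position representative coincides with the canonical lift $\underline{w}$ of its projection $w=\pi_W(b)$. This is exactly what fails in the non-elliptic case. As recalled later in the paper (\S4.2, from \cite[\S4.1]{D}), the good position braid has the explicit form $b=\underline{w'}\cdot\underline{w'w}$; since $w$ is a minimal coset representative for $W_{I_{\tilde w}}\backslash W$, one has $l(w'w)=l(w')+l(w)$, so $l(b)=2l(w')+l(w)>l(w)$ whenever $w'\neq 1$. Thus $b\neq\underline{w}$ outside the elliptic case, and Proposition~\ref{3.1} only yields $(\delta b)^d=\delta^d\underline{w_0}\cdot b_0$, not $(\delta\underline{w})^d=\delta^d\underline{w_0}\cdot b_0$.

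The latter identity is in fact false. Take $W=S_2$, $\delta=\id$, and $\CC=\{1\}$: then $w=1$, $w'=w_0=s$, $w_0w'=1$, and $(\underline{w})^d=1$ is certainly not left-divisible by $\underline{w_0}=\underline{s}$ in $B^+(W)$. Your refactoring $\underline{w_0}=\underline{w_0w'}\cdot\underline{w'}$ is correct, but it is being applied to an identity that does not hold. The point of the corollary is precisely that the weaker divisor $\underline{w_0w'}$ (rather than $\underline{w_0}$) is what survives when one passes from the longer braid $b$ to the shorter lift $\underline{w}$; extracting this requires the full content of \cite[Proposition~3.2]{D}, not just the coarser version recorded here as Proposition~\ref{3.1}.
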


\begin{remark}
    One should be careful that the element $w$ above depends on $\delta$. In other words, different choice of $\delta$, such as $\delta_D$ and $\delta_F$, may give different $w$.
\end{remark}

Let $U_{\tilde{w}}$ be generated by root subgroups corresponding to positive roots in $R_{\tilde{w}}$. Then we have the following result on the Lusztig variety attached to $w$ in case \RNum{1}.
\begin{proposition}\label{3.5}
    Let $\tilde{w}=\delta_D w$ be as above. Then any isotropy group of the $G$-action on $Y_w^D$ is isomorphic to a subgroup of $T^wU_{\tilde{w}}$.
\end{proposition}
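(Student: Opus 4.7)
My plan is to adapt the proof of the elliptic analogue (Theorem 2.3 above, from \cite{Lu2} and \cite{HL}), with the conclusion being weakened from $T^w$ to $T^w U_{\tilde w}$ precisely to accommodate the fixed-root system $R^{\tilde w}$ that appears when $\tilde w$ is no longer elliptic. The key structural input will be Corollary~\ref{3.3}, which replaces the elliptic identity $\tilde b^d = \sigma^d \underline{w_0}\cdot b'$ by $\tilde w^d = \delta^d\,\underline{w_0 w'}\cdot b'$, where $w'$ is the longest element of the standard parabolic subgroup $W'$ corresponding to $R^{\tilde w}$.

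\textbf{Step 1 (Normal form).} Using $G$-equivariance I may assume $B = B_0$. Because $d$ stabilizes both $T$ and $B_0$, the condition $(B_0, g B_0 g^{-1}) \in O_w$ forces $g \in (B_0 \dot w B_0)\,d$. A further conjugation by an element of $U$ (which preserves $B_0$) brings $g$ into the normal form $g = v\,\dot w\,t\,d$ with $v \in U^w$ and $t \in T$. The isotropy of the original point is then isomorphic to $I = Z_G(g) \cap B_0$.

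\textbf{Step 2 (Torus and unipotent decomposition).} Write $x = t_0 u_0 \in I$ with $t_0 \in T$, $u_0 \in U$. Projecting the equation $x g x^{-1} = g$ to the Levi quotient $B_0/U \cong T$ gives $\dot w^{-1} t_0 \dot w = d\,t_0\,d^{-1}$, i.e.\ $t_0 \in T^w$. The residual unipotent equation takes the form
\[
\mathrm{Ad}(\dot w\,d)(u_0) \;=\; v^{-1}\bigl(\mathrm{Ad}(t_0)\,u_0\bigr)\,v,
\]
up to a $v$-dependent correction living in the commutator terms.

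\textbf{Step 3 (Iteration via Corollary~\ref{3.3}).} Iterating the above identity $d$ times, the left-hand side becomes conjugation by $(\dot w\,d)^d$, which by Corollary~\ref{3.3} equals $\delta^d\,\dot{\overline{w_0 w'}}\cdot \dot b'$ up to representatives. The element $w_0 w'$ sends every positive root \emph{not} in $R^{\tilde w}$ to a negative root, while fixing $R^{\tilde w}$ setwise; hence the iterated adjoint action sweeps any root-subgroup component of $u_0$ indexed by $\alpha \in R^+ \setminus R^{\tilde w}$ into $U^-$. Matching this against $u_0 \in U$ on the right-hand side forces the component of $u_0$ along each such $\alpha$ to vanish, i.e.\ $u_0 \in U_{\tilde w}$. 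Combining with Step~2 gives $I \subset T^w U_{\tilde w}$, which is the proposition.

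\textbf{Main obstacle.} The delicate point is Step~3: while Corollary~\ref{3.3} is clean at the Weyl-group level, lifting it to the group requires handling both the positive-braid factor $b'$ and the conjugation by $v$. In the elliptic case $v$ can be eliminated entirely via $U^w$-conjugation, but when $R^{\tilde w} \neq \emptyset$ this simplification is obstructed, so one must verify by a careful bookkeeping on root filtrations of $U$ that the $v$-corrections and the $b'$-conjugation remain ``lower order'' relative to the dominant term $\mathrm{Ad}(\dot{\overline{w_0 w'}})$, and therefore do not spoil the vanishing argument for root components outside $R^{\tilde w}$.
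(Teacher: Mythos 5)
Your Step 1 is where the proof breaks down, and Step 3 (which you yourself flag as the ``main obstacle'') is exactly the part that is never actually carried out. The claim that a further $U$-conjugation brings $g$ into the form $v\,\dot w\,t\,d$ with $v\in U^w$, $t\in T$ is not a routine normalization: even in the elliptic minimal-length case this is the content of the He--Lusztig cross-section theorem, and in the non-elliptic case treated here it is simply false. For instance, for the trivial class the good-position projection is $w=1$, and your claim would say that every $g$ with $gB_0g^{-1}=B_0$ is $U$-conjugate into $Td$, which fails for any nontrivial unipotent $g\in U$; more generally, whenever $I_{\tilde w}\neq\emptyset$ the correct cross-section (from \cite[Theorem 1.2]{D}, used in Lemma \ref{4.5}) is $S_{Br}(b)=U^b\dot w$ with $U^b=U^{w'}T^w\dot w'U^{w'w}\dot w'^{-1}$, which carries extra torus and unipotent directions along $R^{\tilde w}$ that cannot be conjugated away, so the normal form $U^w\dot w T d$ does not exist. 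Since Steps 2--3 are computed relative to this normal form, the iteration in Step 3 has no valid starting point; and even granting a normal form, the ``sweeping into $U^-$'' argument is exactly the delicate lifting of the braid identity of Corollary \ref{3.3} to a group-level conjugation identity with $v$- and $b'$-corrections, which you do not establish. (Your Step 2, extracting $t_0\in T^w$ from the uniqueness of the $N_G(T)$-component of $g$, is essentially sound and matches the final paragraph of the paper's argument, but it only bounds the torus part; the whole difficulty is confining the unipotent part to $U_{\tilde w}$.)

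The paper avoids any normal form for $g$. Following Lusztig's argument for \cite[Theorem 5.2]{Lu1}, it uses Corollary \ref{3.3} purely combinatorially: from reduced expressions of $w,\delta_D(w),\dots,\delta_D^{k-1}(w)$ and of $w_0w'$ together with $b'$, one builds a chain of sequences of Borel subgroups interpolating between the sequence $B, gBg^{-1},\dots,g^kBg^{-k}$ (refined along the reduced expressions) and a sequence ending with a Borel $B'$ satisfying $(B,B')\in O_{w_0w'}$. Because all intermediate Borels are uniquely determined (the expressions involved are reduced and braid moves preserve this), any $x$ in the isotropy group, fixing $B$ and commuting with $g$, must fix every Borel in every sequence, in particular $B'$; hence $x\in B\cap B'$, which is a maximal torus times the unipotent part corresponding to $R^{\tilde w}$, giving $x\in T_BU_{\tilde w}$ with no bookkeeping of commutator corrections. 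Only after this does the paper run the torus argument (uniqueness of $hd_B$ in $g=u_1hd_Bu_2$) to land the torus part in $T^w$. If you want to salvage your approach, you would either have to prove a cross-section statement adapted to $U^b$ (essentially redoing \cite[Theorem 1.2]{D}) before Step 1, or replace Steps 1 and 3 by the chain-of-Borels mechanism above.
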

\begin{proof}
    We follow the idea of \cite[Theorem 5.2]{Lu1}. Let $(g,B)$ be arbitrary in $Y_w^D$ and $G_{g,B}<G$ be its isotropy group. By our choice of $w$, there exists $k\geq 1$ such that $\delta_D^k=1$ and
    \begin{equation*}
        w\delta_D(w)\delta_D^2(w)\cdots \delta_D^{k-1}(w)=\underline{w_0w'}\cdot b',
    \end{equation*} 
    by corollary \ref{3.3}. Let $s_1s_2\ldots s_l$ be a reduced expression of $w$ and $s_1's_2'\ldots s_r'$ be a reduced expression of $w_0w'$. We also write $b'=s_1''s_2''\ldots s_m''$. We first construct a collection $\{\CS^i\}_{1\leq i\leq n}$ of sequences of simple reflections of $W$ satisfying the following conditions:
        \begin{itemize}
        \item $\CS^1=(s_1,s_2,\ldots,s_l,\delta_D(s_1),\delta_D(s_2),\ldots,\delta_D(s_l),\ldots,\delta_D^{k-1}(s_1),\ldots,\delta_D^{k-1}(s_l))$ with $kl$ terms;
        \item $\CS^{i+1}$ is obtained from $\CS^i$ by replacing a subsequence $(s^i_{a+1},\ldots,s^i_{a+r})$ of the form $(s,t,s,t,\ldots)$ by $(t,s,t,s,\ldots)$ where $s\neq t$ and $st$ is of order $r$ in $W$. In other words, the sequence $\CS^{i+1}$ is obtained from $\CS^i$ by applying a braid relation;
        \item $\CS^n=(s_1',s_2',\ldots,s_h',s_1'',\ldots,s_m'')$ also with $kl$ terms.
    \end{itemize}
Based on these sequences, we construct a collection $\{B_*^i\}_{1\leq i\leq n}$  of sequences of Borel subgroups of $G$ satisfying the following conditions:
        \begin{itemize}
        \item $B_*^1=(B_0^1,B_1^1,\ldots, B_{kl}^1)$, where $B_{il}^1=g^iBg^{-i}$ for $0\leq i\leq k$ and $(B_{il+j-1}^1,B_{il+j}^1)\in O_{\delta_D^i(s_j)}$ for $0\leq i\leq k-1$, $1\leq j\leq l$;
        \item $B_*^{i+1}$ is obtained from $B_*^i$ by replacing $(B^i_a,\ldots B^i_{a+r})$ by $(B^{i+1}_a,\ldots B^{i+1}_{a+r})$. Here $B^{i+1}_a=B^i_a$ and $B^{i+1}_{a+r}=B^i_{a+r}$ and $(B^{i+1}_{a+j-1},B^{i+1}_{a+j})\in O_{s^{i+1}_{a+j}}$ for $1\leq j\leq r$. In other words, we have $(B_a^{i+1},B_{a+1}^{i+1})\in O_t$ while $(B_a^{i},B_{a+1}^{i})\in O_s$ and so on;
        \item $B_*^n=(B_0^n,\ldots,B_h^n,B_{h+1}^n,\ldots,B_{h+m}^n)$, where $(B_{i-1}^n,B_{i}^n)\in O_{s_i'}$ for $1\leq i\leq h$ and $B_{h+i-1}^n,B_{h+i}^n)\in O_{s_i''}$ for $1\leq i\leq m$.
    \end{itemize}
  We first consider $B_*^1$. Since $s_1s_2\ldots s_l$ is a reduced expression of $w$ and $(B_0^1,B_l^1)=(B,gBg^{-1})\in O_w$, we have $B_1^1,\ldots,B_{l-1}^1$ are uniquely determined. Similarly, one can see that the sequence $B_*^1$ is uniquely determined. Now notice that the two expressions of a braid relation are both reduced, we have $B^i_*$ are all uniquely determined.

  Now let $x\in G_{g,B}$. By definition we have $xgx^{-1}=g$ and $xBx^{-1}=B$. This tells us $xB_{il}^1x^{-1}=B_{il}^1$ for any $0\leq i\leq k$. Now since $B^1_*$ is uniquely determined, by applying the action of $x$ to it we have $xB_i^1x^{-1}=B_i^1$ for any $0\leq i\leq kl$. Now assume that $xB_j^ix^{-1}=B_j^i$ for $0\leq j\leq kl$. Since $B_*^{i+1}$ is uniquely determined by applying a braid relation to $B_*^i$, we have $xB_j^{i+1}x^{-1}=B_j^{i+1}$ for $0\leq j\leq kl$. Therefore, any Borel subgroup in any $B_*^i$ is fixed by $x$. In particular, we have $xB_h^nx^{-1}=B_h^n$. Since $(B,B_h^n)\in O_{w_0w'}$, we have $x\in B\cup B_h^n$, which is a maximal torus inside $B$ and the unipotent part in $B$ corresponding to $R^{\tilde{w}}$.

  By above, we have $G_{g,B}$ is contained in $T_BU_{\tilde{w}}$ for a maximal torus $T_B\subset B\cap gBg^{-1}$ and $U_{\tilde{w}}\subset U_B$ corresponding to $R_{\tilde{w}}$ with respect to $U_B$. As in \cite[\S 0.2]{Lu2}, let $d_B\in D$ be such that $d_B B d_B^{-1}=B$ and $d_B T_B d_B^{-1}=T_B$. One can write $g=u_1hd_Bu_2$ where $u_1,u_2\in U_B$ and $h\in N_G(T_B)$. Here $hd_B$ is uniquely determined. Now let $x=tu=u't\in G_{g,B}$ where $t\in T_B$ and $u,u'\in U_{\tilde{w}}$. We have $u_1hd_Bu_2=g=xgx^{-1}=xu_1x^{-1}xhd_Bx^{-1}xu_2x^{-1}=xu_1x^{-1}u'thd_Bt^{-1}u'^{-1}xu_2x^{-1}$. Notice that $xu_1x^{-1}u'=tuu_1u^{-1}t^{-1}u'=tuu_1u^{-1}ut^{-1}=tuu_1t^{-1}\in U_B$. Similarly we have $u'^{-1}xu_2x^{-1}\in U_B$. By uniqueness, we have $thd_Bt^{-1}=hd_B$. Finally, since $(B,gBg^{-1})\in O_w$, we have $(B,hBh^{-1})\in O_w$. This together with above tell that $t\dot{w}d_B t^{-1}=\dot{w}d_B$ for $\dot{w}\in N_G(T_B)\slash T_B$. Thus $t\in \{t\in T_B \mid \dot{w}^{-1}t\dot{w}=d_Btd_B^{-1}\}\simeq T^w$. Therefore, our statement is proved.
\end{proof}

As for Deligne-Lusztig varieties, we can similarly prove the following result.
\begin{corollary}
    Let $\tilde{w}=\delta_F w$ be as above. Then any isotropy group of the $G^F$-action on $X_w$ is isomorphic to a subgroup of $TU_{\tilde{w}}$.
\end{corollary}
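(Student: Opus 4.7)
The plan is to mimic the proof of Proposition \ref{3.5}, with the component $D$ replaced by the Frobenius map $F$ and the isotropy condition $xgx^{-1}=g$, $xBx^{-1}=B$ replaced by $x\in G^F$ and $xBx^{-1}=B$. By Corollary \ref{3.3} applied to $\tilde{w}=\delta_F w$, there exists $k\ge 1$ (a multiple of $d$, chosen so that $\delta_F^k$ acts trivially on $W$) with
\begin{equation*}
    w\,\delta_F(w)\,\delta_F^2(w)\cdots \delta_F^{k-1}(w)=\underline{w_0w'}\cdot b'
\end{equation*}
in $B^+(W)$, where $w'$ is the longest element of the parabolic subgroup attached to $R^{\tilde{w}}$. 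Fix reduced expressions $w=s_1s_2\cdots s_l$, $w_0w'=s_1's_2'\cdots s_h'$, and $b'=s_1''\cdots s_m''$.

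Next I would construct the collection $\{\CS^i\}_{1\le i\le n}$ of sequences of simple reflections exactly as in the proof of Proposition \ref{3.5}, starting from
\begin{equation*}
    \CS^1=\bigl(s_1,\ldots,s_l,\delta_F(s_1),\ldots,\delta_F(s_l),\ldots,\delta_F^{k-1}(s_1),\ldots,\delta_F^{k-1}(s_l)\bigr)
\end{equation*}
and ending with $\CS^n=(s_1',\ldots,s_h',s_1'',\ldots,s_m'')$, connected by braid moves. Then define the corresponding sequences $\{B_*^i\}$ of Borel subgroups, but with the only change that the ``anchors'' of $B_*^1$ are given by iterated Frobenii: $B_{il}^1:=F^i(B)$ for $0\le i\le k$, and $(B_{il+j-1}^1,B_{il+j}^1)\in O_{\delta_F^i(s_j)}$ for $1\le j\le l$. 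Reducedness of each expression forces every $B_*^i$ to be uniquely determined.

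Now let $x\in G^F$ with $xBx^{-1}=B$. Since $x$ commutes with $F$ on $G$ and fixes $B$, it fixes each anchor $F^i(B)=B_{il}^1$. By the uniqueness of $B_*^1$, $x$ fixes every Borel in $B_*^1$, and by induction on $i$ (using that the two reduced expressions across a braid move determine the same unique chain) $x$ fixes every Borel in $B_*^n$. In particular $xB_h^n x^{-1}=B_h^n$. Since $(B,B_h^n)\in O_{w_0w'}$, the intersection $B\cap B_h^n$ is the product of a maximal torus $T_B\subset B$ and the unipotent subgroup $U_{\tilde{w}}\subset U_B$ generated by root groups corresponding to $R^{\tilde{w}}$. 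Hence $x\in T_BU_{\tilde{w}}$, and after conjugating by an element of $G$ to identify $T_B$ with $T$ we obtain the stated containment of the isotropy group in a subgroup of $TU_{\tilde{w}}$.

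The main obstacle, exactly as in Proposition \ref{3.5}, is verifying that the uniqueness of the Borel chains propagates across braid moves; this is where reducedness of both sides of every braid relation is essential, and it is the only non-formal ingredient. Everything else is a direct translation of the Lusztig-variety argument, with $d_B$-twisted conjugation replaced by the Frobenius $F$ and the condition $g\in \CO\cdot d_B$ replaced by $F(B)\in O_w(B)$; I do not foresee any further difficulty once the sequence construction is in place.
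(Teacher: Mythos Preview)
Your proposal is correct and is precisely the argument the paper intends: the paper's own proof consists only of the sentence ``we can similarly prove the following result,'' and your write-up is the natural translation of the proof of Proposition~\ref{3.5} to the Deligne--Lusztig setting, with anchors $F^i(B)$ in place of $g^iBg^{-i}$ and $F$-rationality of $x$ replacing $xgx^{-1}=g$. Note in particular that you correctly stop at $T_BU_{\tilde w}$ rather than $T^wU_{\tilde w}$; the final paragraph of the proof of Proposition~\ref{3.5} (which sharpens $T_B$ to $T^w$ via the decomposition $g=u_1hd_Bu_2$) has no analogue here, and the paper explicitly remarks after the corollary that it is not clear whether the torus part lies in $T^w$.
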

However, it is not clear if the torus part lie in $T^w$ or not.

\subsection{Parabolic version}
In this section, we consider the parabolic version of above varieties, which enable us to construct nice coverings.

Let $I\subset S$ and $W_I$ be the corresponding standard parabolic subgroup. We denote the set of minimal coset representative in $\tilde{W}\slash W_I$ (resp. $W_I\backslash \tilde{W}$) as $\tilde{W}^I$ (resp. $^I\tilde{W}$). Set $^I\tilde{W}^I$ as $^I {\tilde{W}}\cap \tilde{W}^I$. 

Let $\CC^\delta,\tilde{\CC}$ and $\tilde{w}=\delta w$ be as in \S 3.2. By corollary \ref{3.3}, we set $I_{\tilde{w}}\subset S$ be the simple reflections of the standard parabolic subgroup $W'<W$ corresponding to $R^{\tilde{w}}$. As in \S 2.3, we fix a ($F$-stable) Borel subgroup $B_0$ and a maximal torus $T\subset B_0$ and identify $W$ with $N_G(T)\slash T$. For any $J\subset S$, let $P_J=B_0W_JB_0$ be a standard parabolic subgroup of $G$ with and $\CP_J$ be the associated partial flag variety. Denote the Levi subgroup of $P_J$ by $L_J$ and the unipotent radical by $U_J$. Let $U_{(R^{\tilde{w}})^-}$ be generated by all roots subgroups corresponding to $(R^{\tilde{w}})^-$, the negative roots in $R^{\tilde{w}}$. Then $P_{I_{\tilde{w}}}=B_0W_{I_{\tilde{w}}}B_0=B_0U_{(R^{\tilde{w}})^-}$. Now in case \RNum{1}, again choose $d\in D$ such that $dB_0d^{-1}=B_0$ and $dTd^{-1}=T$. Then we have $dP_{\delta_D^{-1}(I_{\tilde{w}})}d^{-1}=B_0W_{I_{\tilde{w}}}B_0=P_{I_{\tilde{w}}}$. In case \RNum{2}, since $B_0$ is $F$-stable, we also have $F(P_{\delta_F^{-1}(I_{\tilde{w}})})=P_{I_{\tilde{w}}}$. As in \S 2.3, for any $J\subset S$, we set $L_{J}^w=\{l\in L_J\mid \dot{w}^{-1}l\dot{w}=dld^{-1}\}$ in case \RNum{1} and $L_{J}^w=\{l\in L_J \mid \dot{w}^{-1}l\dot{w}=F(l)\}$ in case \RNum{2}.

Recall that given any twist $\delta$ of $W$, there is a natural diagonal action of $G$ on $\CP_{\delta^{-1}(I_{\tilde{w}})} \times \CP_{I_{\tilde{w}}}$. The set of $G$-orbits of this action is index by $^{\delta^{-1}(I_{\tilde{w}})}W^{I_{\tilde{w}}}$. Denote the $G$-orbit corresponding to a minimal double coset representative $u\in {}^{\delta^{-1}(I_{\tilde{w}})}W^{I_{\tilde{w}}}$ as $O_{\delta^{-1}(I_{\tilde{w}}),I_{\tilde{w}}}(u)$. Now for any $u\in {}^{\delta^{-1}(I_{\tilde{w}})}W^{I_{\tilde{w}}}$ (recall here $\delta=\delta_D$ in case \RNum{1} and $\delta=\delta_F$ in case \RNum{2}), recall the associated generalized Deligne-Lusztig varieties and generalized Lusztig varieties are defined as
\begin{eqnarray*}
    X_{\delta_F^{-1}(I_{\tilde{w}}),I_{\tilde{w}}}(u)&=&\{P\in \CP_{\delta_F^{-1}(I_{\tilde{w}})}\mid (P,F(P))\in O_{\delta_F^{-1}(I_{\tilde{w}}),I_{\tilde{w}}}(u)\};\\
Y_{\delta_D^{-1}(I_{\tilde{w}}),I_{\tilde{w}}}^D(u)&=&\{(g,P)\in D\times \CP_{\delta_D^{-1}(I_{\tilde{w}})}\mid (P,gPg^{-1})\in O_{\delta_D^{-1}(I_{\tilde{w}}),I_{\tilde{w}}}(u)\}.
\end{eqnarray*}
There is a natural left $G$ (resp. $G^F$)-action on $Y_{\delta_D^{-1}(I_{\tilde{w}}),I_{\tilde{w}}}^D(u)$ (resp. $X_{\delta_F^{-1}(I_{\tilde{w}}),I_{\tilde{w}}}(u)$) given by $x:(g,P)\mapsto (xgx^{-1},xPx^{-1})$ (resp. $x:P\mapsto xPx^{-1}$).

We know $w$ is the element in $W$ such that $\tilde{w}=\delta w\in \tilde{W}$ is the natural projection of a good position braid $\tilde{b}$ of a conjugacy class $\tilde{\CC}$. By our contruction of $\tilde{b}$ in \cite[\S 3.2]{D} (see also \cite[Proposition 2.2]{HN}), we have $\tilde{w}$ is a minimal double coset representative in $W_{I_{\tilde{w}}}\backslash \tilde{W} \slash W_{I_{\tilde{w}}}$ and thus $w\in {}^{\delta^{-1}(I_{\tilde{w}})}W^{I_{\tilde{w}}}$. From now on, we focus on the case where $u=w$. In this case, these generalized Lusztig varieties are distinguished in the sense of \cite{Ha}. In the words of \cite[\S 5]{DM}, the triple $(\delta^{-1}(I_{\tilde{w}}),w,I_{\tilde{w}})$ is a morphism in the ribbon category $B^+(I)$. 

Now we have natural coverings of $X_{\delta_F^{-1}(I_{\tilde{w}}),I_{\tilde{w}}}(w)$ and $Y_{\delta_D^{-1}(I_{\tilde{w}}),I_{\tilde{w}}}^D(w)$ defined as
\begin{eqnarray*}
    \tilde{X}_{\delta_F^{-1}(I_{\tilde{w}}),I_{\tilde{w}}}(w)&=&\{g'U_{\delta_F^{-1}(I_{\tilde{w}})}\in G\slash U_{\delta_F^{-1}(I_{\tilde{w}})} \mid g'^{-1}F(g')\in U_{\delta_F^{-1}(I_{\tilde{w}})}\dot{w} U_{I_{\tilde{w}}}\}; \\
    \tilde{Y}^D_{\delta_D^{-1}(I_{\tilde{w}}),I_{\tilde{w}}}(w)&=&\{(g,g'U_{\delta_D^{-1}(I_{\tilde{w}})})\in D\times G\slash U_{\delta_D^{-1}(I_{\tilde{w}})} \mid g'^{-1}gg'\in U_{\delta_D^{-1}(I_{\tilde{w}})}\dot{w} U_{I_{\tilde{w}}}d\}.
\end{eqnarray*}
The first covering $ \tilde{X}_{\delta_F^{-1}(I_{\tilde{w}}),I_{\tilde{w}}}(w)$ was first defined in \cite[Definition 2.3.11]{DMR} and was generalized in \cite[\S 7.3]{DM}. Again there is a natural left action of $G$ (resp. $G^F$) on $\tilde{Y}^D_{\delta_D^{-1}(I_{\tilde{w}}),I_{\tilde{w}}}(w)$ (resp. $\tilde{X}_{\delta_F^{-1}(I_{\tilde{w}}),I_{\tilde{w}}}(w)$) given by $x:(g,g'U_{\delta_D^{-1}(I_{\tilde{w}})})\mapsto (xgx^{-1},xg'U_{\delta_D^{-1}(I_{\tilde{w}})})$ (resp. $x: g'U_{\delta_F^{-1}(I_{\tilde{w}})}\mapsto xg'U_{\delta_F^{-1}(I_{\tilde{w}})}$). Meanwhile, there is also a natural left action of $L_{\delta^{-1}(I_{\tilde{w}})}^w$ on $\tilde{Y}^D_{\delta_D^{-1}(I_{\tilde{w}}),I_{\tilde{w}}}(w)$ (resp. $\tilde{X}_{\delta_F^{-1}(I_{\tilde{w}}),I_{\tilde{w}}}(w)$) given by $l:(g,g'U_{\delta_D^{-1}(I_{\tilde{w}})}) \mapsto (g,g'l^{-1}U_{\delta_D^{-1}(I_{\tilde{w}})})$ (resp. $g'U_{\delta_F^{-1}(I_{\tilde{w}})}\mapsto g'l^{-1}U_{\delta_F^{-1}(I_{\tilde{w}})}$). Moreover, this action commutes with the $G$ (resp. $G^F$)-action. 

The covering maps are defined as $\pi_{\delta_D,w}(g,g'U_{\delta_D^{-1}(I_{\tilde{w}})})=(g,g'P_{\delta_D^{-1}(I_{\tilde{w}})}g'^{-1})$ in case \RNum{1} and $\pi_{\delta_F,w}(g'U_{\delta_F^{-1}(I_{\tilde{w}})})\mapsto g'P_{\delta_F^{-1}(I_{\tilde{w}})}g'^{-1}$ in case \RNum{2}. Clearly these covering maps is compatible with the left action of $L_{\delta^{-1}(I_{\tilde{w}})}^w$ where it acts on $X_{\delta_F^{-1}(I_{\tilde{w}}),I_{\tilde{w}}}(w)$ and $Y^D_{\delta_F^{-1}(I_{\tilde{w}}),I_{\tilde{w}}}(w)$ trivially. In both cases, we know the covering map is a principal $L_{\delta^{-1}(I_{\tilde{w}})}^w$-bundle. 

Now we can prove the first main result. The analogue of (c) for parabolic Deligne-Lusztig varieties was proved in \cite[\S 7.5]{DM}.

\begin{theorem}\label{3.7}
    Let $\CC^{\delta}$ be a $\delta$-conjugacy class of $W$ and $\tilde{w}=\delta w$ be the projection of a good position braid of $\CC^{\delta}$. Then we have

    (a) Any isotropy group of the $G$ (resp. $G^F$)-action on variety $\tilde{Y}^D_{\delta_D^{-1}(I_{\tilde{w}}),I_{\tilde{w}}}(w)$ (resp. $\tilde{X}_{\delta_F^{-1}(I_{\tilde{w}}),I_{\tilde{w}}}(w)$) is trivial;

    (b) Any isotropy group of the $G$ (resp. $G^F$)-action on variety $Y^D_{\delta_D^{-1}(I_{\tilde{w}}),I_{\tilde{w}}}(w)$ (resp. $X_{\delta_F^{-1}(I_{\tilde{w}}),I_{\tilde{w}}}(w)$) is isomorphic to a subgroup of $L_{I_{\tilde{w}}}^w$;

    (c) The varieties $\tilde{Y}^D_{\delta_D^{-1}(I_{\tilde{w}}),I_{\tilde{w}}}(w)$ and $Y^D_{\delta_D^{-1}(I_{\tilde{w}}),I_{\tilde{w}}}(w)$ are affine;
\end{theorem}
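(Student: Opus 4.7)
My overall strategy is to follow the template of the elliptic result stated at the end of \S 2.3 and of Proposition \ref{3.5}: part (a) is the analytical core and adapts the chain-of-Borels argument of Proposition \ref{3.5} to the parabolic setting; part (b) is a formal consequence of (a) via the principal $L^w$-bundle structure of $\pi_{\delta,w}$; part (c) mirrors the Deligne--Lusztig treatment in \cite[\S 7.5]{DM}.

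For (a), let $x$ be in the isotropy of $(g, g'U_{\delta^{-1}(I_{\tilde w})})$. The condition on the $G/U_{\delta^{-1}(I_{\tilde w})}$-coordinate forces $u := g'^{-1}xg' \in U_{\delta^{-1}(I_{\tilde w})}$, and writing $g'^{-1}gg' = u_1 \dot w u_2 d$ with $u_1 \in U_{\delta^{-1}(I_{\tilde w})}$ and $u_2 \in U_{I_{\tilde w}}$, the condition $xgx^{-1}=g$ becomes
\[
 (u_1^{-1} u u_1)\, \dot w\, \bigl(u_2\, {}^{\delta}(u)^{-1} u_2^{-1}\bigr) = \dot w.
\]
I would iterate this identity $d$ times and invoke Corollary \ref{3.3}, which writes $\tilde w^d = \delta^d \underline{w_0 w'} \cdot b'$ for $w_0 w'$ the longest element of $W$ relative to $W'$. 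The braid-relation and uniquely-determined-chain-of-parabolics argument from the proof of Proposition \ref{3.5}, now run with parabolics of type $P_{\delta^{-i}(I_{\tilde w})}$ in place of Borels, should then force $u = 1$.

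For (b), the map $\pi_{\delta,w}$ is, as recorded at the end of \S 3.3, a principal $L^w_{\delta^{-1}(I_{\tilde w})}$-bundle whose $L^w$-action commutes with the $G$-action. If $x\in G$ fixes some $y \in Y^D_{\delta^{-1}(I_{\tilde w}), I_{\tilde w}}(w)$, it acts on the $L^w$-torsor $\pi_{\delta,w}^{-1}(y)$ commuting with the free transitive $L^w$-action, hence by translation by a unique element $l(x) \in L^w_{\delta^{-1}(I_{\tilde w})}$. The kernel of $x \mapsto l(x)$ is the $G$-isotropy on $\tilde Y^D$, trivial by (a); composing with the isomorphism $L^w_{\delta^{-1}(I_{\tilde w})} \cong L^w_{I_{\tilde w}}$ (which arises from the relation $w(I_{\tilde w}) = \delta^{-1}(I_{\tilde w})$, a consequence of $\tilde w$ pointwise fixing $R^{\tilde w}$) gives the stated embedding. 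Part (c) in the Deligne--Lusztig case is precisely \cite[\S 7.5]{DM}; for the Lusztig variety I would mirror that approach, using Corollary \ref{3.3} to produce a map from $\tilde Y^D_{\delta^{-1}(I_{\tilde w}), I_{\tilde w}}(w)$ to an elliptic-type Lusztig variety attached to $\underline{w_0 w'} \cdot b'$, reduce affineness to the elliptic result in \S 2.3, and check that the fibers are affine.

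I expect the main obstacle to be part (c) in case \RNum{1}: without a Lang--Steinberg substitute for the Frobenius, the affine fibration has to be produced geometrically, and this is where the good-position hypothesis on the braid (stronger than mere minimality of length in its conjugacy class) should earn its keep through the factor $b' \in B^+(W)$ appearing in Corollary \ref{3.3}.
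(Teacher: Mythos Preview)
Your plan for (b) matches the paper exactly, and your overall architecture is right. The two places where the paper proceeds differently are (a) and (c).

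For (a), you propose to run the chain argument of Proposition~\ref{3.5} with \emph{parabolics} $P_{\delta^{-i}(I_{\tilde w})}$ in place of Borels. The difficulty is that the braid-move step in Proposition~\ref{3.5} relies on decomposing into simple reflections and on the uniqueness of intermediate Borels in a reduced gallery; there is no direct analogue for a chain of parabolics whose successive relative positions are the full elements $\delta^i(w)$. The paper sidesteps this by observing that the datum $g'U_{\delta^{-1}(I_{\tilde w})}$ already singles out a \emph{Borel}, namely $g'B_0g'^{-1}$, and that $x$ in the isotropy group satisfies $g'^{-1}xg'\in U_{\delta^{-1}(I_{\tilde w})}\subset B_0$, so $x$ is unipotent and normalises $g'B_0g'^{-1}$. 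One then runs the Borel-chain argument of Proposition~\ref{3.5} verbatim with $B=g'B_0g'^{-1}$; the conclusion is $x\in B_0^n\cap B_h^n$ with $(B_0^n,B_h^n)\in O_{\delta^{-1}(w_0w')}$. In Proposition~\ref{3.5} this only gave $x\in T^wU_{\tilde w}$, but here we already know $x$ lies in (a conjugate of) $U_{\delta^{-1}(I_{\tilde w})}$, i.e.\ in the root groups for $R^+\setminus R^{\tilde w}$, and this forces $x=1$. So the new input compared with Proposition~\ref{3.5} is not a parabolic chain but the a priori constraint on $x$ coming from the $G/U_{\delta^{-1}(I_{\tilde w})}$-coordinate.

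For (c), you sketch an affine-fibration argument (map to an elliptic-type variety and check fibres are affine). Affine base plus affine fibres does not in general give an affine total space, so this route would need extra work. The paper instead produces a closed embedding. Writing $b=w\,\delta(w)\cdots\delta^{k-1}(w)=\underline{w_0w'}\cdot b'$ and using the Digne--Michel varieties $\tilde O(I,b)$, one has that $\tilde O(\delta^{-1}(I_{\tilde w}),b)$ is affine by \cite[Proposition~7.26]{DM} because $\underline{w_0w'}$ is a left divisor. The map
\[
\Upsilon:(g,g'U_{\delta^{-1}(I_{\tilde w})})\longmapsto \bigl(g,\;g'U_{\delta^{-1}(I_{\tilde w})},\;gg'U_{\delta^{-1}(I_{\tilde w})}d^{-1},\;\ldots,\;g^{k-1}g'U_{\delta^{-1}(I_{\tilde w})}d^{1-k}\bigr)
\]
is then an isomorphism of $\tilde Y^D_{\delta^{-1}(I_{\tilde w}),I_{\tilde w}}(w)$ onto a closed subvariety of $D\times\tilde O(\delta^{-1}(I_{\tilde w}),b)$, hence $\tilde Y^D$ is affine; affineness of $Y^D$ follows since $\pi_{\delta_D,w}$ is a principal bundle under the reductive group $L^w_{\delta^{-1}(I_{\tilde w})}$. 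This closed-embedding strategy is both simpler and avoids the fibration pitfall you anticipated.
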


\begin{proof}
    (a) Let $(g,g'U_{\delta_D^{-1}(I_{\tilde{w}})})$ be arbitrary in $\tilde{Y}^D_{\delta_D^{-1}(I_{\tilde{w}}),I_{\tilde{w}}}(w)$ and $G_{g,g'}$ be its isotropy group. Let $x\in G_{g,g'}$, we have $xg'U_{\delta_D^{-1}(I_{\tilde{w}})}=g'U_{\delta_D^{-1}(I_{\tilde{w}})}$. Thus $g'^{-1}xg'\in U_{\delta_D^{-1}(I_{\tilde{w}})}$ and $x$ is unipotent. Moreover, we have $x\in g'U_{\delta_D^{-1}(I_{\tilde{w}})}g'^{-1}\subset g'B_0g'^{-1}$.
    
    Again there exists $k\geq 1$ such that $\delta_D^k=1$ and 
    \begin{equation*}
        w\delta_D(w)\delta_D^2(w)\cdots \delta_D^{k-1}(w)=\underline{w_0w'}\cdot b'.
    \end{equation*}
Choose the reduced expressions of $w$, $w_0w'$ and $b$ as in the proof of proposition \ref{3.5}. Then one can again construct a collection $\{\CS^i\}_{1\leq i\leq n}$ of sequences of simple reflections as there. By applying $\delta_D^{-1}$ to this collection we get a new collection $\{\CS'^i\}_{1\leq i\leq n}$. Now we set $B_{il}^1=g^ig'B_0g'^{-1}g^{-i}$ for $0\leq i\leq k$.
One can similarly construct a collection $\{B_*^i\}_{1\leq i\leq n}$ as before based on the new collection $\{\CS'^i\}_{1\leq i\leq n}$. By our assumption, we have $xg'B_0g'^{-1}x^{-1}=g'B_0g'^{-1}$ and $xgx^{-1}$. This again implies that $xB_{i}^mx^{-1}=B_i^m$ for $0\leq i\leq kl$ and $1\leq m \leq n$. In particular, consider $x\in B_0^n\cap B_h^n$. We know $x$ is unipotent and $x$ actually lies in $g'U_{\delta_D^{-1}(I_{\tilde{w}})}g'^{-1}$, which corresponds to the roots in $\delta_D^{-1}(R^+\setminus R^{\tilde{w}})$ with respect to $g'B_0g'^{-1}=B_0^n$. Since $(B_0^n,B_h^n)\in O_{\delta_D^{-1}(w_0w')}$ by our assumption, we must have $x=1$ and the isotropy group $G_{g,g'}$ is trivial. One can prove this for $\tilde{X}_{\delta_F^{-1}(I_{\tilde{w}}),I_{\tilde{w}}}(w)$ in exactly the same way as $x\in G^F$.

(b) Again we only prove for $Y^D_{\delta_D^{-1}(I_{\tilde{w}}),I_{\tilde{w}}}(w)$ and the other case is the same. Let $(g,P)\in Y^D_{\delta_D^{-1}(I_{\tilde{w}}),I_{\tilde{w}}}(w)$ and $G_{g,P}$ be the associated isotropy group. One can find $g'$ such that $\pi_{\delta_D,w}(g,g'U_{\delta_D^{-1}(I_{\tilde{w}})})=(g,P)$. Let $x\in G_{g,P}$, we then have $\pi_{\delta_D,w}(g,xg'U_{\delta_D^{-1}(I_{\tilde{w}})})=(xgx^{-1},xPx^{-1})=\pi_{\delta_D,w}(g,g'U_{\delta_D^{-1}(I_{\tilde{w}})})$. Recall that $\pi_{\delta_D,w}$ is a principal $L_{\delta_D^{-1}(I_{\tilde{w}})}^w$-bundle, we have find a unique $l\in L_{\delta_D^{-1}(I_{\tilde{w}})}^w$ such that $xg'U_{\delta_D^{-1}(I_{\tilde{w}})}=g'l^{-1}U_{\delta_D^{-1}(I_{\tilde{w}})}$. This defines a homomorphism $x\mapsto l$ from $G_{g,P}$ to $L_{\delta_D^{-1}(I_{\tilde{w}})}^w$. Moreover, the kernel of this maps lies in an isotropy group of $G$-action on $\tilde{Y}^D_{\delta_D^{-1}(I_{\tilde{w}}),I_{\tilde{w}}}(w)$ which is trivial. We have $G_{g,P}$ is isomorphic to a subgroup of $L_{\delta_D^{-1}(I_{\tilde{w}})}^w$ and thus isomorphic to a subgroup of $L_{I_{\tilde{w}}}^w$.

(c) This proof is motivated by the proof of Digne and Michel. We first prove that $\tilde{Y}^D_{\delta_D^{-1}(I_{\tilde{w}}),I_{\tilde{w}}}(w)$ is affine. As in \cite[Definition 7.10]{DM}, if $u$ is a minimal coset representative in $^IW$ for some $I\subset S$ and $u^{-1}Iu=J$ for some $J\subset S$, then we call $(I,u,J)$ an admissible triple and define
\begin{equation*}
    \tilde{O}(I,u)=\{(gU_I,g'U_J)\in G\slash U_I \times G\slash U_J \mid g^{-1}g'\in U_I\dot{u}U_J\}.
\end{equation*}
Moreover, let $b\in B^+(W)$ be arbitrary such that the longest left divisor of $b$ in $B^+(W_I)$ is trivial and $\pi_W(b)^{-1}I\pi_W(b)=J$ for some $J\subset S$. Suppose that there is a decomposition $b=w_1w_2\cdots w_m$ with a sequence $I=I_0,I_1,\ldots,I_m=J$ such that $(I_{i-1},w_{i},I_{i})$ is an admissible triple, we define 
\begin{equation*}
    \tilde{O}(I,b)=\tilde{O}(I,w_1)\times_{G\slash U_{I_1}} \cdots \times_{G\slash U_{I_{m-1}}} \tilde{O}(I_{m-1},w_m).
\end{equation*}
By \cite[\S 7.3]{DM}, this is well-defined. In other words, if we choose another suitable decomposition of $b$, the resulting variety will be isomorphic to this variety.

Now we choose $b=w\delta_D(w)\delta_D^2(w)\cdots \delta_D^{k-1}(w)$ and $I_0=\delta_D^{-1}(I_{\tilde{w}})$. By our discussion before, we have $(\delta_D^{-1}(I_{\tilde{w}}),w,I_{\tilde{w}})$ is an admissible triple. Set $I_j=\delta_D^{j-1}(I_{\tilde{w}})$, we have $(I_j,\delta_D^j(w),I_{j+1})$ are all admissible triples. Meanwhile, notice that $b=\underline{w_0w'}\cdot b'$ by our assumption above, we have $\tilde{O}(I,b)$ is affine by \cite[Proposition 7.26]{DM}. Thus $D\times \tilde{O}(I,b)$ is also affine. Now we consider 
\begin{equation*}
    \tilde{Y}':=\{(g,g^{(1)}U_{\delta_D^{-1}(I_{\tilde{w}})}, g^{(2)}U_{I_{\tilde{w}}},\cdots, g^{(k)}U_{\delta_D^{k-2}(I_{\tilde{w}})})\}\cap (D\times \tilde{O}(I,b)).
\end{equation*}
By above we have $\tilde{Y}'$ is a closed subvariety of $D\times \tilde{O}(I,b)$ and thus is affine. We define a map $\Upsilon$ from $ \tilde{Y}^D_{\delta_D^{-1}(I_{\tilde{w}}),I_{\tilde{w}}}(w)$ to $\tilde{Y}'$ given by
\begin{equation*}
    (g,g'U_{\delta_D^{-1}(I_{\tilde{w}})})\mapsto (g,g'U_{\delta_D^{-1}(I_{\tilde{w}})}, gg'U_{\delta_D^{-1}(I_{\tilde{w}})}d^{-1},\cdots, g^{k-1}g'U_{\delta_D^{-1}(I_{\tilde{w}})}d^{1-k}).
\end{equation*}
First notice that $gg'U_{\delta_D^{-1}(I_{\tilde{w}})}d^{-1}=gg'd^{-1}dU_{\delta_D^{-1}(I_{\tilde{w}})}d^{-1}=gg'd^{-1}U_{I_{\tilde{w}}}\in G\slash U_{I_{\tilde{w}}}$ and similar $g^{j}g'U_{I_{\tilde{w}}}d^{-j}\in G\slash U_{\delta_D^{j-1}(I_{\tilde{w}})}$ for any $j$. By the definition of $ \tilde{Y}^D_{\delta_D^{-1}(I_{\tilde{w}}),I_{\tilde{w}}}(w)$, we have $(g'U_{\delta_D^{-1}(I_{\tilde{w}})},gg'U_{\delta_D^{-1}(I_{\tilde{w}})}d^{-1})\sim (U_{\delta_D^{-1}(I_{\tilde{w}})},g'^{-1}gg'U_{\delta_D^{-1}(I_{\tilde{w}})}d^{-1})$, which lies in $\tilde{O}(\delta_D^{-1}(I_{\tilde{w}}),w)$. By similar method we see $\Upsilon$ is well-defined. Moreover, we have $\Upsilon$ is an isormorphism. This tells that $ \tilde{Y}^D_{\delta_D^{-1}(I_{\tilde{w}}),I_{\tilde{w}}}(w)$ is affine. Recall that $\pi_{\delta_D,w}$ is a principal $L^w_{\delta^{-1}(I_{\tilde{w}})}$-bundle where $L^w_{\delta^{-1}(I_{\tilde{w}})}$ is reductive, we have the variety $ Y^D_{\delta_D^{-1}(I_{\tilde{w}}),I_{\tilde{w}}}(w)$ is also affine.
\end{proof}

\section{$\CC$-small classes and orbit spaces}
In this section, we generalized the definition of $\CC$-small classes to general case using parabolic Lusztig varieties in \S 3.3 (non-twisted version). We also investigate orbit spaces of these varieties.

\subsection{$\CC$-small classes}
Let $\CC$ be any conjugacy class of $W$ and $w\in \CC$ be the natural projection of a good position braid representative of $\CC$. Let $I_w\subset S$ be as in \S 3.3. We have $w$ is a minimal double coset representative in $^{I_w}W^{I_w}$ and $O_{I_w}(w)$ is the $G$-orbit on $\CP_{I_w}\times \CP_{I_w}$ corresponding to $w$. Recall that the associated generalized Lusztig variety is
\begin{equation*}
    Y_{I_w}(w)=\{(g,P)\in G\times \CP_{I_w}\mid (P,gPg^{-1})\in O_{I_w}(w)\}.
\end{equation*}
There is a natural stratification $Y_{I_w}(w)=\underset{\CO\in [G]}{\bigsqcup} Y_{I_w}^\CO(w)$, where
\begin{equation*}
    Y_{I_w}^{\CO}(w)=\{(g,P)\in Y_{I_w}(w)\mid g\in \CO\}.
\end{equation*}
For any $\CO\in [G]$, the variety $Y_{I_w}^{\CO}(w)$ is a union of $G$-orbits in $Y_{I_w}^{\CO}$. By theorem \ref{3.7}, we know each $G$-orbit has dimension at least $\dim G-\dim L_{I_w}^w$. Then we have 
\begin{equation*}
    \dim Y_{I_w}^{\CO}\geq \dim G-\dim L_{I_w}^w=\dim G-\dim T^w-\lvert R^w \rvert.
\end{equation*}

\begin{definition} \label{4.1}
    A conjugacy class $\CO$ of $G$ is called $\CC$-small if
    \begin{equation*}
        \dim Y_{I_w}^{\CO}=\dim G-\dim T^w-\lvert R^w \rvert,
    \end{equation*}
    for $w$, which is the natural projection of some/any good position braid representative of $\CC$.
\end{definition}

\begin{corollary} \label{4.2}
    Let $\CC\in [W]$ and $w\in \CC$ be the natural projection of a good position braid representative of $\CC$. For any $g\in G$, the variety $\CP^w_g:=\{P\in \CP_{I_w} \mid (P,gPg^{-1})\in O_{I_w}(w)\}$ is smooth of pure dimension $l(w)$ if nonempty.
\end{corollary}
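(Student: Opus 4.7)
The strategy is to exhibit $\CP_g^w$ as the fiber over $g$ of the first projection $\rho\colon Y_{I_w}(w) \to G$, $(g,P)\mapsto g$, and to deduce smoothness of pure dimension $l(w)$ from Theorem \ref{3.7}(a) by means of an infinitesimal argument.

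First I verify that $Y_{I_w}(w)$ itself is smooth of pure dimension $\dim G + l(w)$. The second projection $(g,P)\mapsto P$ realizes $Y_{I_w}(w)$ as a $G$-equivariant fibration over $\CP_{I_w}$ whose fiber over $P_{I_w}$ is the double coset $P_{I_w}\dot{w}P_{I_w}$, a smooth $P_{I_w}\times P_{I_w}$-orbit of dimension $\dim P_{I_w}+l(w)$ by the standard formula (using that $w$ is a minimal double coset representative in ${}^{I_w}W^{I_w}$, hence normalizes $W_{I_w}$).

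The heart of the proof is then the smoothness of $\rho$. A direct tangent-space computation shows that, in left-trivialized coordinates, the image of $d\rho$ at $(g,P)\in Y_{I_w}(w)$ is the subspace $\fkp_P + (1-\Ad(g^{-1}))\fkg \subset \fkg$, where $\fkp_P=\Lie(P)$. Taking annihilators under a nondegenerate $\Ad$-invariant bilinear form on $\fkg$ (available since $G$ is reductive), one has $\fkp_P^\perp = \fku_P$, the Lie algebra of the unipotent radical of $P$, while the annihilator of $(1-\Ad(g^{-1}))\fkg$ is the centralizer $\fkz_\fkg(g)$. Hence $\rho$ is smooth at $(g,P)$ if and only if
\[
\fkz_\fkg(g)\cap \fku_P=0.
\]

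This vanishing follows from Theorem \ref{3.7}(a). Given any lift $(g,g'U_{I_w})\in \tilde{Y}_{I_w,I_w}(w)$ with $P=g'P_{I_w}g'^{-1}$ (which exists because the covering $\tilde{Y}_{I_w,I_w}(w)\to Y_{I_w}(w)$ is surjective), a direct computation identifies the Lie algebra of the $G$-stabilizer of this point with $\{\xi\in\fkg : [\xi,g]=0,\ \xi\in \Ad(g')\fku_{I_w}\} = \fkz_\fkg(g)\cap \fku_P$. Since Theorem \ref{3.7}(a) shows that this stabilizer is trivial, its Lie algebra vanishes and the required transversality holds. Thus $\rho$ is smooth on all of $Y_{I_w}(w)$, its image in $G$ is open, and each nonempty fiber $\CP_g^w$ is smooth of pure dimension $\dim Y_{I_w}(w)-\dim G=l(w)$. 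Beyond Theorem \ref{3.7}(a), the only remaining work is the bookkeeping that matches the transversality condition for $\rho$ with the infinitesimal isotropy at the covering level, and I do not anticipate a serious obstacle there.
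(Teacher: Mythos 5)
Your overall strategy (realize $\CP^w_g$ as a fiber of $\rho\colon Y_{I_w}(w)\to G$ and prove $\rho$ is smooth by an infinitesimal transversality criterion) is reasonable, and the criterion you arrive at is essentially the dual of the one the paper uses. But the key step is where the argument breaks: you deduce the vanishing $\fkz_\fkg(g)\cap\fku_P=0$ from Theorem \ref{3.7}(a). Theorem \ref{3.7}(a), as stated and proved, only says that the isotropy group of the $G$-action on $\tilde{Y}_{I_w,I_w}(w)$ has no nontrivial $\Bk$-points; its proof is entirely at the level of elements $x\in G$ fixing chains of Borel subgroups. Over a field of positive characteristic (and $\Bk=\overline{\BF_q}$, indeed possibly bad characteristic, is exactly the setting of the paper) the scheme-theoretic stabilizer $Z_G(g)\cap g'U_{I_w}g'^{-1}$ can be a nontrivial infinitesimal group scheme with trivial group of points, so ``stabilizer trivial $\Rightarrow$ its Lie algebra vanishes'' is not a legitimate inference; centralizers of elements need not be smooth, and $\fkz_\fkg(g)$ can be strictly larger than anything detected by points of $Z_G(g)$. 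This is precisely why the paper does not quote Theorem \ref{3.7}(a) here but instead reruns the braid argument \emph{infinitesimally}: for $x\in\ker(1-\Ad(g)\mid\fkg^*)\cap\fkp^\perp$ it shows $x\in\fkp_i^\perp$ for the whole chain $\fkp_i=\Ad(g)^{i-1}\fkp$, then uses the good-position property $w^d=\underline{w_0w'}\cdot b'$ together with \cite[\S 7.1]{DM} to produce a parabolic $P'$ with $(P,P')\in O_{I_w}(w_0w')$ and $x\in\fkp'^\perp$, and concludes from $\fkp^\perp\cap\fkp'^\perp=0$. Your proposal is missing exactly this infinitesimal input, which is the actual content of the corollary.

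Two smaller points. First, your translation of the transversality condition uses ``a nondegenerate $\Ad$-invariant bilinear form on $\fkg$, available since $G$ is reductive''; such a form does not exist in general in small characteristic (e.g.\ $\mathfrak{sl}_p$ or $\mathfrak{pgl}_p$ in characteristic $p$, types $B$, $C$ in characteristic $2$), which is why the paper phrases the condition on $\fkg^*$ with $\fkp^\perp$ the annihilator, following \cite[Lemma 1.1]{Lu0}; you should do the same rather than pass to $\fkz_\fkg(g)\cap\fku_P$. Second, the dimension count $\dim O_{I_w}(w)=\dim\CP_{I_w}+l(w)$ does not follow merely from $w$ being a minimal double coset representative in ${}^{I_w}W^{I_w}$; it uses $w^{-1}I_ww=I_w$, which holds here because $(I_w,w,I_w)$ is an admissible (ribbon) triple coming from the good position braid representative, not from minimality alone.
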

\begin{proof}
    Conisder the intersection of $O_{I_w}(w)$ and $\{(P,gPg^{-1})\mid P\in \CP_{I_w}\}$ in $\CP_{I_w}\times \CP_{I_w}$. It suffices to show that these two varieties intersects transversally if $\CP_g^w$ is nonempty. Indeed, we first notice that $\dim \{(P,gPg^{-1})\mid P\in \CP_{I_w}\}=\dim \CP_{I_w}$. Meanwhile, we have $\dim O_{I_w}(w)=\dim \CP_{I_w}+l(w)$ since $w$ is a minimal double coset representative. Then the dimension of the intersection will be $l(w)$ by transversality.

    Suppose that $\CP_g^w$ is nonempty, let $(P,gPg^{-1})$ be arbitrary in the above intersection. Let $\fkg,\fkp$ be the Lie algebras of $G$ and $P$. As in the proof of \cite[Lemma 1.1]{Lu0}, it suffices to show that $\ker(1-\Ad(g)\mid \fkg^* \to \fkg^*)\cap \fkp^\perp=0$. Set $b=w^d\in B^+(W)$. As in the proof of theorem \ref{3.7}(c), we have $(I_w,w,I_w)$ is an admissible triple. Consider a sequence $(\fkp_1,\fkp_2,\cdots,\fkp_{d+1})=(\fkp,\Ad(g)\fkp,\cdots,\Ad(g)^{d}\fkp)$ of subalgebras of $\fkg$ conjugate to $\fkp$. Here $\fkp_i$ is the Lie algebra of $P_i=g^{i-1}Pg^{1-i}$. We have $(P_i,P_{i+1})\in O_{I_w}(w)$ for all $i$. Let $x$ be arbitrary in $\ker(1-\Ad(g)\mid \fkg^* \to \fkg^*)\cap \fkp^\perp$. We must have $x\in \fkp_i^\perp$ for all $i$. Meanwhile, by the proof of \cite[Proposition 3.2]{D}, there is another decomposition $b=w_1w_2\cdots w_r$ with $w_1=w_0w'$ and each $w_i$ is a minimal double coset representative of $^{I_w}W^{I_w}$. Therefore, by \cite[\S 7.1]{DM} we have $x\in \fkp'^\perp$ where $\fkp'$ is the Lie algebra of $P'$ and $(P,P')\in O_{I_w}(w_0w')$. It is then clear that $\fkp^\perp \cap \fkp'^\perp=0$ and our statement is proved.
\end{proof}

Then we have the following result since the fiber of projection $Y_{I_w}^{\CO}\to \CO$ over any $g\in G$ is isomorphic to $\CP_g^w$.
\begin{corollary}
        Keep the notations in corollary \ref{4.2}. Let $\CO$ be any conjugacy class of $G$, then the variety $Y_{I_w}^{\CO}(w)$ is smooth of pure dimension $\dim \CO +l(w)$ if nonempty.
\end{corollary}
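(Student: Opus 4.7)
The plan is to exploit the $G$-equivariant projection $\pi\colon Y_{I_w}^{\CO}(w)\to \CO$, $(g,P)\mapsto g$, whose fiber over $g\in \CO$ is by definition the variety $\CP_g^w$ studied in Corollary~\ref{4.2}. That corollary already tells us that every nonempty fiber of $\pi$ is smooth of pure dimension $l(w)$, so the bulk of the work is over; the remaining task is to propagate this fiberwise information to the total space.

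Next I would observe that, since $\CO$ is a single $G$-conjugacy class and $\pi$ is equivariant for the diagonal action $x\cdot(g,P)=(xgx^{-1},xPx^{-1})$, any two fibers of $\pi$ are $G$-conjugate and hence isomorphic. In particular, assuming $Y_{I_w}^{\CO}(w)$ is nonempty, every fiber of $\pi$ is nonempty and smooth of the same dimension $l(w)$, and $\pi$ is surjective onto the smooth homogeneous space $\CO$. This already yields the expected dimension count $\dim Y_{I_w}^{\CO}(w) = \dim \CO + l(w)$.

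For smoothness, I would upgrade the above to the statement that $\pi$ is an étale-locally trivial fiber bundle. Because $\pi$ is a $G$-equivariant morphism onto a homogeneous $G$-space, one obtains local trivializations by choosing a point in the fiber over some $g_0\in \CO$ and spreading it out along $G$-translates (using an étale local section of $G\to \CO$ through $g_0$). Smoothness of the base $\CO$ together with smoothness of the fiber of pure dimension $l(w)$ then implies that the total space $Y_{I_w}^{\CO}(w)$ is smooth of pure dimension $\dim \CO + l(w)$, as claimed.

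The only potentially delicate point is passing from fiberwise smoothness to smoothness of the total space; the homogeneity of the target removes all difficulty, and as an alternative one may simply note that the constancy of the fiber dimension over the smooth irreducible base $\CO$ forces $\pi$ to be flat, after which geometric regularity of the fibers gives smoothness of $\pi$ and hence of $Y_{I_w}^{\CO}(w)$.
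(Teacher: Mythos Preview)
Your approach is exactly the one the paper takes: project $Y_{I_w}^{\CO}(w)$ onto $\CO$ and invoke Corollary~\ref{4.2} on the fibers. The paper's justification is a single clause (``since the fiber of projection $Y_{I_w}^{\CO}\to\CO$ over any $g\in G$ is isomorphic to $\CP_g^w$''), whereas you have spelled out the passage from fiberwise smoothness to global smoothness via the $G$-equivariance and homogeneity of $\CO$; this extra care is appropriate and your main argument is correct.

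One small caveat on your closing alternative: the implication ``constant fiber dimension over a smooth irreducible base forces flatness'' is miracle flatness, which needs the source to be Cohen--Macaulay, something you have not yet established at that point. Your primary argument via \'etale-local (or fppf-local) triviality of an equivariant map onto a homogeneous space avoids this and is the right way to finish.
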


Now we recall that Lusztig defined a map $\Psi$ from unipotent orbits in $G$ to conjugacy classes of $W$ in \cite[Theorem 0.2]{Lu3}. Now for any unipotent orbit $\CO\in [G_u]$ and $\CC=\Psi(\CO)$,  we have
\begin{equation*}
    \dim \CO+l(w)+\lvert R^w \rvert =\dim G-\dim T^w,
\end{equation*}
by \cite[Proposition 3.6]{D}. In particular, if $Y_{I_w}^{\CO}(w)$ is nonempty, then $\CO$ is $\CC$-small. The following result is then direct.

\begin{proposition} \label{4.4}
      Let $\CO\in [G_u]$ be arbitrary and $\CC=\Psi(\CO)$. Let $w\in \CC$ be the natural projection of a good position braid representative of $\CC$. Then the $G$-action on $Y_{I_w}^{\CO}(w)$ has finitely many orbits if  nonempty.
\end{proposition}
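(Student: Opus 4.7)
The strategy is a dimension count combined with a standard finiteness-of-components argument. The preceding (unlabeled) corollary, which is deduced from Corollary \ref{4.2}, states that $Y_{I_w}^{\CO}(w)$ is smooth of pure dimension $\dim \CO + l(w)$. On the other hand, Theorem \ref{3.7}(b) guarantees that every $G$-isotropy group on $Y_{I_w}^{\CO}(w)$ embeds into $L_{I_w}^w$, so every $G$-orbit has dimension at least
$$\dim G - \dim L_{I_w}^w = \dim G - \dim T^w - |R^w|.$$
Since $\CC=\Psi(\CO)$, the identity
$$\dim \CO + l(w) + |R^w| = \dim G - \dim T^w$$
from \cite[Proposition 3.6]{D} matches these two dimensions exactly. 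Hence every $G$-orbit in $Y_{I_w}^{\CO}(w)$ has dimension equal to $\dim Y_{I_w}^{\CO}(w)$.

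From this, finiteness is formal. Each $G$-orbit is an irreducible, locally closed subvariety whose closure is irreducible of dimension $\dim Y_{I_w}^{\CO}(w)$; such a closed irreducible subset must be a full irreducible component of $Y_{I_w}^{\CO}(w)$. Since any two nonempty open subsets of an irreducible variety meet, while distinct $G$-orbits are disjoint, distinct orbits necessarily produce distinct irreducible components. As the quasi-projective variety $Y_{I_w}^{\CO}(w)$ has only finitely many irreducible components, only finitely many $G$-orbits can appear.

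The real content of the proposition is the matching of the two independent dimension estimates, which hinges on the $\CC$-smallness of $\CO$ afforded by $\CC=\Psi(\CO)$; the topological argument in the second paragraph is purely formal. No substantive obstacle appears, since every ingredient required—Theorem \ref{3.7}(b), Corollary \ref{4.2}, the smoothness and pure-dimensionality corollary, and the numerical identity of \cite[Proposition 3.6]{D}—is in place at the moment the proposition is stated.
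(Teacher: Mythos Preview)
Your proposal is correct and follows precisely the approach the paper intends: the paper states the proposition as a ``direct'' consequence of the preceding paragraph, which assembles exactly the three ingredients you use---Theorem~\ref{3.7}(b), the smoothness/pure-dimension corollary, and the numerical identity from \cite[Proposition~3.6]{D}---to force every orbit to have full dimension, after which finiteness follows from the component count you give.
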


\subsection{Orbit spaces}
In this section, we investigate the spaces of $G$-actions on $Y_{I_w}(w)$ and $\tilde{Y}_{I_w}(w)$.  Recall that $\tilde{Y}_{I_w}(w)=\{(g,g'U_{I_w})\in G \times G\slash U_{I_w}\mid g'^{-1}gg'\in U_{I_w}\dot{w}U_{I_w}\}$. Moreover, it is actually isomorphic to
\begin{equation*}
    \{(g,g'{}^wU_{I_w})\in G\times G\slash {}^wU_{I_w}\mid g'^{-1}gg'\in \dot{w}U_{I_w}\}.
\end{equation*}
Here $^wU_{I_w}=U_{I_w}\cap \dot{w}U_{I_w}\dot{w}^{-1}$, which corresponds to all roots in $R^+\setminus (R^w\cup \Inv(w^{-1}))$. 

We first prove a technical lemma. Let $w$ be the natural projection of a good position braid representative $b$ of $\CC$. Let $w'$ be the longest element in the standard parabolic subgroup $W'<W$ corresponding to $R^w$. Recall in \cite[\S 4.1]{D}, we have $b=\underline{w'}\cdot \underline{w'w}$ and the corresponding slice is defined as $S_{Br}(b)=U^b\dot{w}$ where $U^b=U^{w'}T^w\dot{w}'U^{w'w}\dot{w}'^{-1}$. As in \S 3.3, let $U_{(R^w)^+}$ (resp. $U_{(R^w)^-})$ be generated by all root subgroups corresponding to $(R^w)^+$ (resp. $(R^w)^-$). We also set $^{w^{-1}}U_{I_w}=U_{I_w}\cap \dot{w}^{-1}U_{I_w}\dot{w}$. Using results there, one can prove the following lemma.

\begin{lemma}\label{4.5}
    Let $w,b$ and $U^b$ be as above. There is a bijection
    \begin{equation*}
        \eta_w: {}^{w^{-1}}U_{I_w}\times U^b\to UT^wU_{(R^w)^-},
    \end{equation*}
    given by $(u,g)\mapsto ug\dot{w}u^{-1}\dot{w}^{-1}$.
\end{lemma}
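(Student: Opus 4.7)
The plan is to rewrite the map in an equivalent form: setting $s:=g\dot w\in S_{Br}(b)=U^b\dot w$, we have $\eta_w(u,g)\dot w=u(g\dot w)u^{-1}=usu^{-1}$, so the lemma becomes the assertion that $(u,s)\mapsto usu^{-1}$ is a bijection ${}^{w^{-1}}U_{I_w}\times S_{Br}(b)\to UT^wU_{(R^w)^-}\dot w$.

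First I would check that $\eta_w$ lands in the stated codomain. Two useful observations are that $\Inv(w^{-1})\subset R^+\setminus R^w$ forces $U^w\subset U_{I_w}$, and that $L_{I_w}$ normalizes $U_{I_w}$. Writing $g=p_+tzp_-$ with $p_+\in U^{w'}=U_{(R^w)^+}$, $t\in T^w$, $z\in U^w$, $p_-\in U_{(R^w)^-}$, and $v:=\dot w u^{-1}\dot w^{-1}\in U_{I_w}$, I shuffle the $L_{I_w}$-factors past the $U_{I_w}$-factors to rewrite
\begin{equation*}
\eta_w(u,g)=ugv=u_{\mathrm{new}}\cdot(p_+tp_-),
\end{equation*}
with $u_{\mathrm{new}}\in U_{I_w}$ and $p_+tp_-\in U_{(R^w)^+}T^wU_{(R^w)^-}$. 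Combined with $U=U_{I_w}\cdot U_{(R^w)^+}$, this places $\eta_w(u,g)$ in $UT^wU_{(R^w)^-}$. A direct count shows both sides have dimension $|R^+|+|(R^w)^+|+\dim T^w$.

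For injectivity I use two layers of unique factorization. The subgroup $L_{I_w}^w$ is reductive with root system $R^w$ and maximal torus $T^w$; its big cell $U_{(R^w)^+}T^wU_{(R^w)^-}$ admits a unique $(p_+,t,p_-)$-decomposition. The parabolic $P_{I_w}=U_{I_w}\cdot L_{I_w}$ also has a unique factorization. Given $h\in UT^wU_{(R^w)^-}$, writing $h=u_{\mathrm{new}}\cdot l_h$ and then decomposing $l_h=p_+tp_-$ recovers $p_+,t,p_-$ uniquely, hence also $l_1:=p_+t$ and $l_2:=p_-$. With these fixed, the injectivity problem reduces to bijectivity of
\begin{equation*}
\phi:{}^{w^{-1}}U_{I_w}\times U^w\to U_{I_w},\qquad (u,z)\mapsto u\cdot z\cdot v(u),
\end{equation*}
since conjugations by elements of $L_{I_w}$ are automorphisms of $U_{I_w}$.

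This reduced statement is handled by a root-space analysis. Partition $R^+\setminus R^w$ into $A:=R^+\setminus(R^w\cup\Inv(w)\cup\Inv(w^{-1}))$, $B_1:=\Inv(w)\setminus\Inv(w^{-1})$, $B_2:=\Inv(w^{-1})\setminus\Inv(w)$, and $C:=\Inv(w)\cap\Inv(w^{-1})$. Then $u$ is supported on $A\cup B_2$, $v(u)$ on $A\cup B_1$, and $z$ on $B_2\cup C$, and $w$ restricts to a bijection $A\cup B_2\to A\cup B_1$. The $w$-orbits on $R^+\setminus R^w$ therefore split into paths $B_2\to\cdots\to B_1$ (giving a step-by-step solvable subsystem) and cycles lying inside $A$ (giving cyclic linear systems of shape $(I-C\cdot\Ad(\dot w)^k)X=Z$ after ordering roots by height). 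The main obstacle is the non-degeneracy of these cyclic operators on cycles in $A$; here the good-position hypothesis on $b$ is essential, entering through Corollary \ref{3.3} and the analysis of $w^d$ in \cite[\S 3, \S 4.1]{D}.
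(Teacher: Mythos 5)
Your reduction stops exactly where the content of the lemma lies, so there is a genuine gap. After fixing $p_+,t,p_-$ you must solve $u\cdot{}^{p_+t}z\cdot{}^{l}v(u)=u_{\mathrm{new}}$ with $l=p_+tp_-$ and $v(u)=\dot w u^{-1}\dot w^{-1}$, and you declare this equivalent to bijectivity of the untwisted map $\phi(u,z)=uzv(u)$ ``since conjugations by elements of $L_{I_w}$ are automorphisms of $U_{I_w}$.'' That does not follow: only the last factor is conjugated, $l$ depends on the target element $h$, and $l$ is in general neither central nor fixed by $\Ad(\dot w)$ (the $p_\pm$ need not satisfy $\dot w^{-1}p_\pm\dot w=p_\pm$), so the family of maps $(u,z)\mapsto uz\,{}^{l}v(u)$ is genuinely $l$-dependent and is not identified with $\phi$ by pre/post-composing with automorphisms. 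More seriously, even for $\phi$ itself you never prove bijectivity: the path part of your orbit analysis is plausible, but the crucial invertibility on the cyclic $w$-orbits inside $A$ (and the correct treatment of the nonlinear commutator corrections, which do not literally give systems of the shape $(I-C\cdot\Ad(\dot w)^k)X=Z$) is precisely where the good-position hypothesis must do work, and you only assert that it ``enters through Corollary \ref{3.3} and \cite[\S 3, \S 4.1]{D}'' without an argument. For a non-elliptic class and a general representative this invertibility can fail, so this step cannot be waved through.

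By contrast, the paper does not redo any root combinatorics: it quotes three precise statements from \cite{D}. Given $h\in UT^wU_{(R^w)^-}$, one has $h\dot w\in US_{Br}(b)\subset S_{Br}(b)U_{I_w}$ by \cite[\S 4.2]{D}; the first part of \cite[Theorem 1.2]{D} then writes $h\dot w=u_0g\dot wu_0^{-1}$ with $u_0\in U_{I_w}$, $g\in U^b$; and the uniqueness statement \cite[Lemma 4.6]{D}, applied after factoring $u_0^{-1}h$ through $U_{I_w}U^b=U^b\cdot{}^wU_{I_w}$, forces $u_0\in{}^{w^{-1}}U_{I_w}$, giving surjectivity, while injectivity is immediate from \cite[Theorem 1.2]{D}. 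If you want to keep your self-contained approach you would in effect be reproving those results of \cite{D}; as written, your proposal assumes them at the decisive moment and also relies on an unjustified untwisting step.
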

\begin{proof}
    Let $h$ be arbitrary in $UT^wU_{(R^w)^-}$. Then we have $h\dot{w}\in US_{Br}(b)$ and is then contained in $S_{Br}(b)U_{I_w}$ by \cite[\S 4.2]{D}. Therefore, by the first part of \cite[Theorem 1.2]{D}, one can write 
    \begin{equation*}
        h\dot{w}=u_0g\dot{w}u_0^{-1},
    \end{equation*}
    for some $u_0\in U_{I_w}$ and $g\in U^b$. Now consider $u_0^{-1}h\in U_{I_w}U^b=U^b\cdot {}^wU_{I_w}$. Write $u_0^{-1}h=u_1u_2$ with $u_1\in U^b$ and $u_2\in {}^wU_{I_w}$. We then have $u_1u_2\dot{w}=g\dot{w}u_0^{-1}$. This is equivalent to
    \begin{equation*}
        u_1\dot{w}(\dot{w}^{-1}u_2\dot{w})=g\dot{w}u_0^{-1}.
    \end{equation*}
    Since $u_1\dot{w},g\dot{w}\in S_{Br}(b)$ and $\dot{w}^{-1}u_2\dot{w},u_0^{-1}\in U_{I_w}$, by \cite[Lemma 4.6]{D} we must have $u_1=g$ and $u_0^{-1}=\dot{w}^{-1}u_2\dot{w}\in {}^{w^{-1}}U_{I_w}$. This tells that $\eta_w$ is surjective. The injectivity of $\eta_w$ follows directly from \cite[Theorem 1.2]{D}.
\end{proof}

Now we may investigate the orbit spaces $G\backslash \tilde{Y}_{I_w}(w)$ and $G\backslash Y_{I_w}(w)$.

\begin{lemma}\label{4.6}
    Let $\tilde{\mathscr{O}}$ be a $G$-orbit in $\tilde{Y}_{I_w}(w)$. There is a unique $v\in U^w$ such that $(\dot{w}v,{}^wU_{I_w})\in \tilde{\mathscr{O}}$.
\end{lemma}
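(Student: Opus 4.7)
The plan is to reduce the existence-and-uniqueness statement to a bijectivity claim for an explicit morphism of unipotent groups, and then derive that bijectivity from Lemma \ref{4.5}.

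First I would act on an arbitrary representative $(g, g'{}^wU_{I_w})$ of $\tilde{\mathscr{O}}$ by $g'^{-1}\in G$; since the defining condition of $\tilde{Y}_{I_w}(w)$ forces $g'^{-1}gg' \in \dot{w}U_{I_w}$, every orbit contains a standard representative $(\dot{w}u, {}^wU_{I_w})$ with $u \in U_{I_w}$. The stabilizer in $G$ of the point ${}^wU_{I_w}\in G/{}^wU_{I_w}$ is ${}^wU_{I_w}$ itself, so $(\dot{w}u_1, {}^wU_{I_w})$ and $(\dot{w}u_2, {}^wU_{I_w})$ lie in the same $G$-orbit precisely when there exists $x\in {}^wU_{I_w}$ with $x\dot{w}u_1 x^{-1} = \dot{w}u_2$. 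Writing $\bar{x} := \dot{w}^{-1}x\dot{w}\in {}^{w^{-1}}U_{I_w}$, this equation becomes $u_2 = \bar{x}u_1 x^{-1}$. The lemma is therefore equivalent to the bijectivity of the morphism
\begin{equation*}
\Psi \colon {}^wU_{I_w} \times U^w \to U_{I_w}, \qquad (x, v) \mapsto \bar{x} v x^{-1}.
\end{equation*}

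To prove $\Psi$ is bijective I would invoke Lemma \ref{4.5}, which provides a bijection $\eta_w \colon {}^{w^{-1}}U_{I_w}\times U^b \to UT^wU_{(R^w)^-}$, $(u_0, g)\mapsto u_0 g\dot{w}u_0^{-1}\dot{w}^{-1}$. Reparametrizing via $u_0 = \bar{x}$, so that $x = \dot{w}u_0\dot{w}^{-1}\in {}^wU_{I_w}$, the formula reads $(x, g)\mapsto \bar{x}g x^{-1}$, and restricting the second factor to $U^w\subseteq U^b$ recovers $\Psi$. Injectivity of $\Psi$ is inherited from $\eta_w$. For surjectivity, given $u\in U_{I_w}$, let $(u_0, g) = \eta_w^{-1}(u)$; since $u$, $u_0$, and $\dot{w}u_0^{-1}\dot{w}^{-1}$ all lie in $U_{I_w}$, so does $g$, placing $g\in U^b\cap U_{I_w}$. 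It thus suffices to verify $U^b\cap U_{I_w} = U^w$.

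Using the explicit description $U^b = U^{w'}T^w\dot{w}'U^{w'w}\dot{w}'^{-1}$ from \cite[\S 4.1]{D}, uniqueness of the Bruhat-type decomposition $G = UTU^-$, together with the fact that $U^{w'}$ is supported on $\Inv(w') = (R^w)^+\subset R^w$, forces the $T^w$- and $U^{w'}$-components to be trivial in any element lying in $U_{I_w}$. For the remaining factor $\dot{w}'u_2\dot{w}'^{-1}$, the condition of being in $U_{I_w}$ restricts $u_2$ to root subgroups indexed by $\alpha \in \Inv((w'w)^{-1})\setminus (R^w)^+$. Since $w'$ is longest in $W_{I_w}$, it sends $(R^w)^+$ to $(R^w)^-$ and permutes the positive roots outside $R^w$; combined with the identity $\Inv((w'w)^{-1}) = (R^w)^+ \sqcup w'\Inv(w^{-1})$, conjugating the surviving roots by $w'$ yields exactly $\Inv(w^{-1})$, the root set of $U^w$. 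This establishes $U^b\cap U_{I_w} = U^w$, and the main obstacle of the proof is precisely this root-theoretic identification; every other step is a formal manipulation of the orbit structure and the direct invocation of Lemma \ref{4.5}.
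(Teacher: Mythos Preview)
Your proposal is correct and follows essentially the same route as the paper: both reduce to showing that $\eta_w^{-1}(U_{I_w}) = {}^{w^{-1}}U_{I_w}\times U^w$ via Lemma~\ref{4.5}, and both finish with a root-theoretic check that the $U^b$-component of any preimage of $u\in U_{I_w}$ must lie in $U^w$. Your phrase ``decomposition $G = UTU^-$'' is imprecise (this is only the big cell, and $\dot{w}'u_2\dot{w}'^{-1}$ is not purely in $U^-$), but what is really needed---the Levi decomposition $P_{I_w}=L_{I_w}\ltimes U_{I_w}$ together with big-cell uniqueness inside $L_{I_w}$ after writing $\dot{w}'u_2\dot{w}'^{-1}=v_-v_+$ with $v_-\in U_{(R^w)^-}$, $v_+\in U^w$---is routine and matches the paper's own terse ``by checking the corresponding root subgroups'' at the same step.
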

\begin{proof}
    For existence, we first notice that there exists $u\in U_{I_w}$ with $(\dot{w}u,{}^wU_{I_w})\in \tilde{\mathscr{O}}$. Then it suffices to find $v\in U^w$ such that $h\dot{w}uh^{-1}=\dot{w}v$ for some $h\in {}^wU_{I_w}$. This is equivalent to $u=\dot{w}^{-1}h^{-1}\dot{w}uh$. Now we set $\dot{w}^{-1}h^{-1}\dot{w}=h'$. It suffices to show that $u=h'v\dot{w}h'^{-1}\dot{w}^{-1}$ for some $h'\in \dot{w}^{-1}{}^wU_{I_w}\dot{w}={}^{w^{-1}}U_{I_w}$ and $v\in U^w$.
    
    Now by lemma \ref{4.5}, we have $u=h'v\dot{w}h'^{-1}\dot{w}^{-1}$  for some $h'\in {}^{w^{-1}}U_{I_w}$ and $v\in U^b$. Since $u\in U_{I_w}$, we must have $v\in U^w$. Indeed, write $v=v_1l$ for some $v_1\in U^w$ and $l\in L$ where $L=U_{(R^w)^+}T^wU_{(R^w)^-}$ is a reductive subgroup of $G$. Then $l=1$ by checking the corresponding root subgroups and the torus part of $h'v\dot{w}h'^{-1}\dot{w}^{-1}$. This proves the existence of $v$.

    As for uniqueness, suppose that $(\dot{w}v_1,{}^wU_{I_w})$ and $(\dot{w}v_2,{}^wU_{I_w})$ are both in $\tilde{\mathscr{O}}$ with $v_1,v_2
    \in U^w$. Then there exists $h\in {}^wU_{I_w}$ with $\dot{w}v_1=h\dot{w}v_2h^{-1}$. Again by lemma \ref{4.5}, we must have $h=1$ and $v_1=v_2$.
\end{proof}

From above result we know the intersection of the closed subvariety $\{(\dot{w}v,{}^wU_{I_w})\mid v\in U^w\}$ of $\tilde{Y}_{I_w}(w)$ with each $G$-orbit in $\tilde{Y}_{I_w}(w)$ is a singleton.

\begin{lemma}\label{4.7}
    The intersection of $\{(\dot{w}v,P_{I_w})\mid v\in U^w\}$ with any $G$-orbit in $Y_{I_w}(w)$ is a single $L_{I_w}^w$-orbit.
\end{lemma}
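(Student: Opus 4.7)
The plan is to lift the problem to the cover $\tilde Y_{I_w}(w)$ and combine the parametrization from Lemma \ref{4.6} with the fact that $\pi_w$ is a principal $L_{I_w}^w$-bundle whose $L_{I_w}^w$-action commutes with the $G$-action. Under this setup, $G$-orbits on $Y_{I_w}(w)$ correspond bijectively to $(G\times L_{I_w}^w)$-orbits on $\tilde Y_{I_w}(w)$, while Lemma \ref{4.6} identifies $G$-orbits on $\tilde Y_{I_w}(w)$ with $U^w$ via $v\mapsto G\cdot(\dot w v,{}^wU_{I_w})$. The task therefore reduces to describing the induced $L_{I_w}^w$-action on $U^w$ and matching it with the conjugation action of $L_{I_w}^w\subset P_{I_w}$ on the slice $\{(\dot w v,P_{I_w}):v\in U^w\}$.

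The most substantive step, and the one I expect to be the bottleneck, is to show that $L_{I_w}$ (and a fortiori $L_{I_w}^w$) normalizes $U^w$. Because $w$ fixes $R^w=R_{I_w}$ pointwise, $\mathrm{Inv}(w)\cap R_{I_w}=\emptyset$, so the set $-\mathrm{Inv}(w)$ of roots of $\dot w^{-1}U\dot w\cap U^-$ lies in $R^-\setminus R_{I_w}$, which is the root set of $U^-_{I_w}$. Thus for $v\in U^w$ one has $\dot w^{-1}v\dot w\in U^-_{I_w}$. For $l\in L_{I_w}$, setting $l'=\dot w^{-1}l\dot w\in L_{I_w}$, I compute
\[\dot w^{-1}(lvl^{-1})\dot w=l'(\dot w^{-1}v\dot w)l'^{-1}\in U^-_{I_w}\subset U^-,\]
using that $L_{I_w}$ normalizes $U^-_{I_w}$. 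Since also $lvl^{-1}\in U_{I_w}\subset U$, we obtain $lvl^{-1}\in U\cap\dot w U^-\dot w^{-1}=U^w$. For $l\in L_{I_w}^w$ one has in addition $l\dot w=\dot w l$, so the $G$-action of $l$ sends $(\dot w v,P_{I_w})$ to $(\dot w\,lvl^{-1},P_{I_w})$, confirming that the $L_{I_w}^w$-orbit of any point in the slice stays inside the slice.

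For the reverse inclusion, suppose $(\dot w v_1,P_{I_w})$ and $(\dot w v_2,P_{I_w})$ lie in the same $G$-orbit in $Y_{I_w}(w)$. Lift them to $(\dot w v_i,{}^wU_{I_w})\in\tilde Y_{I_w}(w)$; by the commuting actions and the principal bundle structure, the two lifts satisfy $g\cdot l\cdot(\dot w v_1,{}^wU_{I_w})=(\dot w v_2,{}^wU_{I_w})$ for some $g\in G$ and $l\in L_{I_w}^w$. In the ${}^wU_{I_w}$-form of $\tilde Y_{I_w}(w)$, the $L_{I_w}^w$-action reads $l:(g_0,g'\,{}^wU_{I_w})\mapsto(g_0,g'l^{-1}\,{}^wU_{I_w})$, well-defined because $L_{I_w}^w$ normalizes ${}^wU_{I_w}=U_{I_w}\cap\dot w U_{I_w}\dot w^{-1}$ (it normalizes $U_{I_w}$ and commutes with $\dot w$). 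Expanding the equation and using $l\dot w=\dot w l$ forces $g=hl$ for some $h\in{}^wU_{I_w}$, and yields
\[h\dot w(lv_1l^{-1})h^{-1}=\dot w v_2.\]
Both $lv_1l^{-1}$ and $v_2$ lie in $U^w$ by the previous paragraph, so the uniqueness half of Lemma \ref{4.6}, which is itself a direct application of Lemma \ref{4.5}, forces $h=1$ and $v_2=lv_1l^{-1}$. Hence $v_1$ and $v_2$ lie in a single $L_{I_w}^w$-conjugacy class inside $U^w$, proving the lemma. The main obstacle is the root-theoretic normalization claim for $U^w$; everything else is formal bookkeeping around the cover $\pi_w$ and Lemma \ref{4.6}.
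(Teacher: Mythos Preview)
Your argument is correct and structurally identical to the paper's: both lift to $\tilde Y_{I_w}(w)$, use that $\pi_{\id,w}$ is a principal $L_{I_w}^w$-bundle whose $L_{I_w}^w$-action commutes with the $G$-action, and reduce the single-orbit claim to Lemma~\ref{4.6}. The one place where you diverge is the verification that $L_{I_w}^w$ normalizes $U^w$. The paper argues root-by-root via the Chevalley commutator formula, invoking \cite[Lemma~4.5]{D} to see that for $\alpha\in R^w$ and $\beta\in\Inv(w^{-1})$ any positive integral combination $m\alpha+n\beta$ lying in $R$ again lies in $\Inv(w^{-1})$. Your route---observing that $\dot w^{-1}U^w\dot w\subset U_{I_w}^-$ and that $\dot w$ normalizes $L_{I_w}$ because $w$ fixes $R_{I_w}=R^w$ pointwise---is slightly more direct and in fact establishes the stronger statement that the full Levi $L_{I_w}$, not only $L_{I_w}^w$, normalizes $U^w$. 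Either way this step is short; the remainder of the proof is the same bookkeeping around the cover and Lemma~\ref{4.6}.
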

\begin{proof}
    Let $\mathscr{O}$ be any $G$-orbit in $Y_{I_w}(w)$ and $Z=\{(\dot{w}v,P_{I_w})\mid v\in U^w\}\cap \mathscr{O}$. Let $\pi_{\id,w}:\tilde{Y}_{I_w}(w)\to Y_{I_w}(w)$ be the covering map as defined in \S 3.3 where $\id$ is the trivial twist. Let $\tilde{\mathscr{O}}$ be the $G$-orbit in $\tilde{Y}_{I_w}(w)$ such that $\pi_{\id,w}(\tilde{\mathscr{O}})=\mathscr{O}$. By lemma \ref{4.6}, we have $Z$ is nonempty. Let $(\dot{w}v,P_{I_w})$ be arbitrary in $Z$ and $l\in L_{I_w}^w$, then $(l\dot{w}vl^{-1},P_{I_w})\in \mathscr{O}$. Notice that $l\dot{w}vl^{-1}=\dot{w}lvl^{-1}$ since $l\in L_{I_w}^w$. Moreover, since the unipotent parts of $L_{I_w}^w$ corresponds to the roots in $R^w$ and $v\in U^w$, we have $lvl^{-1}\in U^w$. Indeed, let $\alpha\in R^w$ and $\beta\in \Inv(w^{-1})$, then $m\alpha+n\beta\in R$ for some $m,n\in \BZ_{>0}$ implies $m\alpha+n\beta\in \Inv(w^{-1})$ by our proof of \cite[Lemma 4.5]{D}. Therefore, let $U_{\alpha}$ be the root subgroup of $\alpha$ and $u\in U_{\alpha}$, we must have $uvu^{-1}\in U^w$ by Chevalley commutator formula. This tells that $(l\dot{w}vl^{-1},P_{I_w})\in Z$ and $L_{I_w}^w$ acts on $Z$.

    Then we show $Z$ is a single $L_{I_w}^w$-orbit. Suppose that $(\dot{w}v_1,P_{I_w})$ and $(\dot{w}v_2,P_{I_w})$ are both in $Z$ where $v_1,v_2\in U^w$. Then $(\dot{w}v_1,{}^wU_{I_w})$ and $(\dot{w}v_2,{}^wU_{I_w})$ are both in $\tilde{\mathscr{O}}$. Thus $(\dot{w}v_1,{}^wU_{I_w})=(x\dot{w}v_2x^{-1},x{}^wU_{I_w})$ for some $x\in G$. Since $\pi_{\id,w}$ is a principal $L_{I_w}^w$-bundle, we must have $(\dot{w}v_1,l^{-1}{}^wU_{I_w})=(x\dot{w}v_2x^{-1},x{}^wU_{I_w})$ for some $l\in L_{I_w}^w$. By apply the action of $l$ and $x$ to two sides separately, we have $(l\dot{w}v_1l^{-1},{}^wU_{I_w})$ and $(\dot{w}v_2,{}^wU_{I_w})$ are in the same $G$-orbit. Since $L_{I_w}^w$ acts on $Z$, we have $l\dot{w}v_1l^{-1}=\dot{w}v_1'$ for some $v_1'\in U^w$. Then by lemma \ref{4.6} we must have $v_1'=v_2$ and thus $(\dot{w}v_1,P_{I_w})$ and $(\dot{w}v_2,P_{I_w})$ are in the same $L_{I_w}^w$-orbit. 
\end{proof}

Now we further assume that $\CC=\Psi(\CO)$ of some unipotent orbit $\CO\in [G_u]$. By proposition \ref{4.4}, we have the subset $Y_{I_w}^{\CO}(w)\subset Y_{I_w}(w)$ is the union of finitely many $G$-orbits or empty. Meanwhile, by the second part of \cite[Theorem 1.2]{D}, we know $S_{Br}(b)$ intersects $\CO$ transversally. This tells that $S_{Br}(b)\cap \CO$ is a finite nonempty set. Then by pulling back to $Y_{I_w}(w)$, we have $Y_{I_w}^{\CO}(w)$ is nonempty. Therefore, we have our second main result which summarizes these two sections.
\begin{proposition}\label{4.8}
    Let $\CC\in [W]$ and $w\in \CC$ be the natural projection of a good position braid representative of $\CC$. Then: 
 
    (1) The orbit space $G\backslash \tilde{Y}_{I_w}(w)$ is isomorphic to the affine space $U^w$;

    (2) The orbit space $G\backslash Y_{I_w}(w)$ is isomorphic to $L_{I_w}^w\backslash U^w$;

    If we further require $\CC=\Psi(\CO)$ of some unipotent orbit $\CO\in [G_u]$, then:

    (3) $\CO$ is the unique $\CC$-small unipotent conjugacy class in $G$.
\end{proposition}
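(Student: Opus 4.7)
The plan is to deduce (1) and (2) directly from Lemmas \ref{4.6}--\ref{4.7}, promoting the bijections on $\Bk$-points to isomorphisms of varieties using the freeness statement of Theorem \ref{3.7}(a), and then to obtain (3) by combining the transversal-slice content of \cite[Theorem 1.2(2)]{D}, Corollary \ref{4.3}, and \cite[Proposition 3.6]{D}.

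For (1), I would consider the $G$-equivariant morphism
\[
\Xi \colon G \times U^w \longrightarrow \tilde{Y}_{I_w}(w), \qquad (x,v) \longmapsto (x\dot{w}vx^{-1},\, x \cdot {}^wU_{I_w}),
\]
where $G$ acts by left translation on the first factor and trivially on $U^w$. Lemma \ref{4.6} shows $\Xi$ is a bijection on $\Bk$-points, while Theorem \ref{3.7}(a) implies that its differential is injective everywhere; thus $\Xi$ is \'etale, hence an isomorphism, and passing to $G$-quotients gives $G\backslash \tilde{Y}_{I_w}(w) \simeq U^w$. Part (2) follows either by repeating the same strategy with Lemma \ref{4.7} in place of Lemma \ref{4.6}, or by descending from (1) via the principal $L_{I_w}^w$-bundle structure of $\pi_{\id,w}$ recalled in \S 3.3, whose action commutes with that of $G$; in either case one obtains $G\backslash Y_{I_w}(w) \simeq L_{I_w}^w \backslash U^w$.

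For (3), the existence half is short: since $\CC = \Psi(\CO)$, \cite[Theorem 1.2(2)]{D} guarantees that $S_{Br}(b) \cap \CO$ is finite and nonempty, and pulling this back through the natural map $S_{Br}(b) \to G$ shows $Y_{I_w}^\CO(w)$ is nonempty. Corollary \ref{4.3} then computes $\dim Y_{I_w}^\CO(w) = \dim \CO + l(w)$, which by \cite[Proposition 3.6]{D} equals $\dim G - \dim T^w - |R^w|$, so $\CO$ is $\CC$-small. For uniqueness, if $\CO''$ is any $\CC$-small unipotent class, then part (2) combined with the isotropy bound of Theorem \ref{3.7}(b) forces every $L_{I_w}^w$-orbit in $(U^w)^{\CO''} := \{v \in U^w : \dot{w}v \in \CO''\}$ to consist of $L_{I_w}^w$-fixed points. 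I would then reinterpret these fixed $v$ through the slice $S_{Br}(b) = U^b \dot{w}$, so that each element $\dot{w}v$ lies in $S_{Br}(b) \cap G_u$, and invoke \cite[Theorem 1.2(2)]{D} a second time to conclude $\CO'' = \CO$.

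The principal obstacle will be this last step of the uniqueness argument: one must carefully match the $L_{I_w}^w$-fixed locus of $U^w$ with the unipotent part of $S_{Br}(b)$, so that the transversal-intersection content of \cite[Theorem 1.2(2)]{D} rules out every unipotent class other than $\CO$ among those of dimension $\dim G - \dim T^w - |R^w| - l(w)$. Everything else is formal and reduces to bookkeeping with the covering $\pi_{\id,w}$ and the dimension identities already established.
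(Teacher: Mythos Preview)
Your treatment of (1), (2), and the existence half of (3) is essentially the paper's own: both routes rest on Lemmas~\ref{4.6}--\ref{4.7} for the orbit-space identifications, and on the transversal-slice input (nonemptiness of $S_{Br}(b)\cap\CO$ pulled back to $Y_{I_w}(w)$) together with the dimension identity of \cite[Proposition~3.6]{D} to see that $\CO$ is $\CC$-small. Your extra step of upgrading the set-level bijection of Lemma~\ref{4.6} to a variety isomorphism via the free action in Theorem~\ref{3.7}(a) is a reasonable elaboration that the paper leaves implicit.

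The uniqueness half of (3), which you rightly flag as the obstacle, does not go through as sketched. Two concrete issues. First, $\CC$-smallness only forces the isotropy of each $G$-orbit in $Y_{I_w}^{\CO''}(w)$ to have full \emph{dimension} in $L_{I_w}^w$, so the corresponding $L_{I_w}^w$-orbits in $U^w$ are merely finite, not single points; at best you obtain fixedness under the identity component $(L_{I_w}^w)^\circ$. Second, and more seriously, for $v\in U^w$ the element $\dot w v$ does not lie in $S_{Br}(b)=U^b\dot w$; via Lemma~\ref{4.5} one can only say that $\dot w v$ is \emph{conjugate} to some $g\dot w\in S_{Br}(b)$. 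Even granting that, the second part of \cite[Theorem~1.2]{D} asserts only transversality of $S_{Br}(b)$ with the specific class $\CO$ and says nothing about excluding other unipotent classes of the same dimension from meeting the slice, so invoking it ``a second time'' to force $\CO''=\CO$ is not justified by transversality alone. The paper, for its part, presents (3) as a summary of the preceding material and does not supply a separate uniqueness argument either; so on this point your sketch attempts more than the text, but the route you propose would need a genuinely stronger input than transversality to close.
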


\end{document}